\newtheorem{theorem}{Theorem}[section]
\newtheorem{lemma}[theorem]{Lemma}
\theoremstyle{definition}
\theoremstyle{remark}
\numberwithin{equation}{section}
\newcommand{\spt}[2]{\mbox{\normalfont spt}_{#1}\Parans{#2}}
\newcommand{\sptBar}[2]{\overline{\mbox{\normalfont spt}}_{#1}\Parans{#2}}
\newcommand{\Parans}[1]{\left(#1\right)}
\newcommand{\PieceTwo}[4]
{
	\left\{
   	\begin{array}{ll}
      	#1 & #3 \\
       	#2 & #4
     	\end{array}
	\right.
}
\newcommand{\aqprod}[3]{\Parans{#1;#2}_{#3}}
\newcommand{\Jac}[2]{\left(\frac{#1}{#2}\right)}
\author{CHRIS JENNINGS-SHAFFER}
\address{Department of Mathematics, Oregon State University\\
Corvallis, Oregon 97331, USA
\endgraf jennichr@math.oregonstate.edu}
\keywords{Number theory, partitions, Bailey's Lemma, Bailey pairs, conjugate Bailey pairs,
congruences, ranks, cranks}
\subjclass[2010]{Primary 11P81, 11P83}
\title{Some Smallest Parts Functions from Variations of Bailey's Lemma }
\begin{document}

\allowdisplaybreaks

\begin{abstract}
We construct new smallest parts partition functions and smallest parts crank functions
by considering variations of Bailey's Lemma and conjugate Bailey pairs. The
functions we introduce satisfy simple linear congruences modulo $3$ and $5$.
We introduce and give identities for two four variable $q$-hypergeometric functions; 
these functions specialize to some of our new spt-crank-type functions as well as 
many known spt-crank-type functions.
\end{abstract}

\maketitle

\section{Introduction}
\allowdisplaybreaks

In the recent study of ranks and cranks for smallest parts partition it has 
become apparent that Bailey pairs and Bailey's Lemma are inherent to this 
study. One can review the articles 
\cite{AGL, GarvanJennings, GarvanJennings2, Patkowski} 
to see this is the case.
We will demonstrate another method in which this occurs.
We recall a partition of an integer $n$ is a non-increasing sequence of 
positive integers that
sum to $n$. We let $p(n)$ denote the number of partitions of $n$; as an example
$p(5)=7$ since the partitions of $5$ are 
$5$, $4+1$, $3+2$, $3+1+1$, $2+2+1$, $2+1+1+1$, and $1+1+1+1+1$.
There are many functions related to $p(n)$, one of them is $\spt{}{n}$, 
the smallest parts partition function.
The smallest parts partition function was 
introduced by Andrews in \cite{Andrews} as a weighted count on the partitions 
of $n$, by counting each partition by the number of times the smallest
part appears. 
From the partitions of $5$ we see that $\spt{}{5}=14$. 

Both $p(n)$ and $\spt{}{n}$ satisfy certain linear congruences, as do many
functions related to partitions. In particular the partition function satisfies
$p(5n+4)\equiv 0\pmod{5}$, $p(7n+5)\equiv 0\pmod{7}$, and 
$p(11n+6)\equiv 0\pmod{11}$ and the smallest parts function satisfies
$\spt{}{5n+4}\equiv 0\pmod{5}$, $\spt{}{7n+5}\equiv 0\pmod{7}$, and
$\spt{}{13n+6}\equiv 0\pmod{13}$. The congruences for $p(n)$ were first 
observed by Ramanujan and now have a plethora of proofs, a few of which can be
found in \cite{AS, Berndt2, Garvan1, Ramanujan}. The congruences for 
$\spt{}{n}$ were first established by
Andrews when he introduced the function in \cite{Andrews}. Focusing on 
$\spt{}{n}$, we note that Andrews original congruences were met with a storm of
articles establishing various facts about the smallest parts function and new 
congruences. To name just a few of these congruences, in \cite{Ono2} Ono established for
$\ell\ge 5$ prime and $\Jac{-\delta}{\ell}=1$ that
\begin{align*}
	\spt{}{\frac{\ell^2(\ell n+\delta)+1}{24}  }\equiv 0\pmod{\ell}
	,
\end{align*}
in \cite{Garvan3} Garvan established for $a,b,c\ge 1$ and with
$\delta_a$, $\lambda_b$, $\gamma_c$ respectively denoting the least non-negative 
reciprocals of $24$ modulo  $5^a$, $7^b$, and $13^c$ that
\begin{align*}
	\spt{}{5^an+\delta_a}&\equiv 0 \pmod{5^{ \left\lfloor\frac{a+1}{2}\right\rfloor  }}
	,\\
	\spt{}{7^bn+\lambda_b}&\equiv 0 \pmod{7^{ \left\lfloor\frac{b+1}{2}\right\rfloor  }}
	,\\
	\spt{}{13^cn+\gamma_c}&\equiv 0 \pmod{13^{ \left\lfloor\frac{c+1}{2}\right\rfloor  }}
	,
\end{align*}
and in \cite{AhlgrenBringmannLovejoy} Ahlgren, Bringmann, and Lovejoy established
for $\ell\ge 5$ prime, $m\ge 1$, and $\Jac{-n}{\ell}=1$ that
\begin{align*}
	\spt{}{\frac{\ell^2n+1}{24}  }\equiv 0\pmod{\ell^m}
	.
\end{align*}

The smallest parts function is of interest not just for its congruences and
elegant combinatorial description, but also for its modular properties.
While the generating function for $p(n)$ is a modular form of weight
$-1/2$, the generating function for $\spt{}{n}$ is instead one of two pieces
of the so-called holomorphic part of a weight $3/2$ harmonic Maas form
\cite{Bringmann,Ono2}. Maass forms are of great recent interest in number
theory. One application of Maass forms is that they give new explanations of
Ramanujan's mock theta conjectures \cite{Andersen1, Folsom}. Other smallest parts functions
were also found to be related to Maass forms in \cite{BLO1}.

Here we are interested in studying a wide array of functions whose generating
functions have a similar form to that of $\spt{}{n}$. We further restrict our
attention to those that satisfy simple linear congruences, 
like $\spt{}{5n+4}\equiv 0\pmod{5}$, that can be explained
by a so-called spt-crank. The first spt-crank was given by Andrews, Garvan, and
Liang in \cite{AGL}, however the idea of using partition statistics to explain
partition congruences originated with Dyson's rank conjectures 
\cite{AS, Dyson}. This topic experienced a resurgence beginning with Garvan's vector 
crank \cite{Garvan1} and the Andrews-Garvan crank \cite{AndrewsGarvan}.
The idea of a rank or crank is to define a statistic that yields a refinement of
a congruence. This is best illustrated with an example. The Dyson rank of a 
partition is defined as the largest part minus the number of parts. The 
partition function satisfies the congruence $p(5n+4)\equiv 0\pmod{5}$. If one
groups the partitions of $5n+4$ according to the value of their rank reduced 
modulo $5$, then one has five sets of equal size. This can be seen with the
partitions of $4$ in the following table.
\begin{align*}
\begin{array}{c|c|c}
	partition & rank & rank \pmod{5}
	\\
	\hline
	4& 3 & 3
	\\
	3+1& 1 & 1
	\\
	2+2& 0 & 0
	\\	
	2+1+1& -1 & 4
	\\	
	1+1+1+1& -3 & 2
\end{array}
\end{align*}
For a given partition like function with a congruence, one can attempt to find
a statistic to explain the congruence. However, the statistic may not have 
such an elegant and simple definition as the Dyson rank of a partition.

The goal of this article is to demonstrate a method by which we can find and 
introduce new partitions functions while simultaneously obtaining a crank 
function for them. From this method we select those functions
that satisfy simple linear congruences and by which the crank gives a 
proof of the congruences. In Section 2 we introduce these new functions,
state various identities and congruences, and prove a few preliminary 
results. In Section 3 we describe with an example our method for finding these
new functions. In Sections 4 and 5 we prove the results stated in Section 2. Finally 
in Section 6 we give a few concluding remarks.

\section{Preliminaries and Statement of Results}

Throughout this article we use the standard product notation,
\begin{align*}
	\aqprod{z}{q}{n} 
		&= \prod_{j=0}^{n-1} (1-zq^j)
	,
	&\aqprod{z}{q}{\infty} 
		&= \prod_{j=0}^\infty (1-zq^j)
	,\\
	\aqprod{z_1,\dots,z_k}{q}{n} 
		&= \aqprod{z_1}{q}{n}\dots\aqprod{z_k}{q}{n}
	,
	&\aqprod{z_1,\dots,z_k}{q}{\infty} 
		&= \aqprod{z_1}{q}{\infty}\dots\aqprod{z_k}{q}{\infty}
.
\end{align*}
To begin we define we define two generic functions,
\begin{align*}
	F(\rho_1,\rho_2,z;q)
	&=
	\frac{\aqprod{q}{q}{\infty}}{\aqprod{z,z^{-1},\rho_1,\rho_2}{q}{\infty}}
	\sum_{n=1}^\infty
	\frac{\aqprod{z,z^{-1},\rho_1,\rho_2}{q}{n}(\tfrac{q}{\rho_1\rho_2})^n}{\aqprod{q}{q}{2n}}	
	,\\	
	G(\rho_1,\rho_2,z;q)
	&=
	\frac{\aqprod{q}{q}{\infty}}{\aqprod{z,z^{-1},\rho_1,\rho_2}{q}{\infty}}
	\sum_{n=1}^\infty
	\frac{\aqprod{z,z^{-1},\rho_1,\rho_2}{q}{n}(\tfrac{q^2}{\rho_1\rho_2})^n}{\aqprod{q}{q}{2n}}	
	.
\end{align*}
We would also like to let $\rho_2\rightarrow\infty$ in $F(\rho_1,\rho_2,z;q)$
and $G(\rho_1,\rho_2,z;q)$, however this requires a slight alteration. In
particular we let
\begin{align*}
	F(\rho,z;q)
	&=
	\lim_{\rho_2\rightarrow\infty}\aqprod{\rho_2}{q}{\infty}F(\rho,\rho_2,z,q)
	=
	\frac{\aqprod{q}{q}{\infty}}{\aqprod{\rho,z,z^{-1}}{q}{\infty}}
	\sum_{n=1}^\infty\frac{\aqprod{z,z^{-1},\rho}{q}{n}(-1)^nq^{\frac{n(n+1)}{2}}\rho^{-n}}{\aqprod{q}{q}{2n}}
	,\\
	G(\rho,z;q)
	&=
	\lim_{\rho_2\rightarrow\infty}\aqprod{\rho_2}{q}{\infty}G(\rho,\rho_2,z,q)
	=
	\frac{\aqprod{q}{q}{\infty}}{\aqprod{\rho,z,z^{-1}}{q}{\infty}}
	\sum_{n=1}^\infty\frac{\aqprod{z,z^{-1},\rho}{q}{n}(-1)^nq^{\frac{n(n+3)}{2}}\rho^{-n}}{\aqprod{q}{q}{2n}}
.
\end{align*}
The special cases of these functions we are interested in are
\begin{align*}
	S_{G1}(z,q) 
	&= 
		G(q,q^2,z;q^2)
		=
		\frac{\aqprod{q^2}{q^2}{\infty}}{\aqprod{z,z^{-1},q,q^2}{q^2}{\infty}}
		\sum_{n=1}^\infty
		\frac{\aqprod{z,z^{-1},q,q^2}{q^2}{n}q^n}{\aqprod{q^2}{q^2}{2n}}
	,\\
	S_{G2}(z,q)	
		&=
		G(iq^{1/2},-iq^{1/2},z;q)
		=
		\frac{\aqprod{q}{q}{\infty}}{\aqprod{z,z^{-1}}{q}{\infty}\aqprod{-q}{q^2}{\infty}}
		\sum_{n=1}^\infty
		\frac{\aqprod{z,z^{-1}}{q}{n}\aqprod{-q}{q^2}{n}q^n}
			{\aqprod{q}{q}{2n}}
	,\\
	S_{F1}(z,q)
		&=
		F(-q,z;q)
		=
		\frac{\aqprod{q}{q}{\infty}}{\aqprod{z,z^{-1},-q}{q}{\infty}}
		\sum_{n=1}^\infty
		\frac{\aqprod{z,z^{-1},-q}{q}{n}q^{\frac{n(n-1)}{2}}}
			{\aqprod{q}{q}{2n}}	
	,\\
	S_{G3}(z,q) 
		&=
		G(q,z;q)
		=
		\frac{\aqprod{q}{q}{\infty}}{\aqprod{z,z^{-1},q}{q}{\infty}}
		\sum_{n=1}^\infty
		\frac{\aqprod{z,z^{-1},q}{q}{n}(-1)^nq^{\frac{n(n+1)}{2}}}
			{\aqprod{q}{q}{2n}}	
	.
\end{align*}
Additionally we define three functions of a similar form 
that we will find are related to the conjugate Bailey
pair identities $(1.7)$, $(1.9)$, and $(1.12)$ of \cite{Lovejoy},
\begin{align*}
	S_{L7}(z;q)
	&=
		\frac{\aqprod{-q}{q}{\infty}}{\aqprod{z,z^{-1}}{q^2}{\infty}}
		\sum_{n=1}^\infty
		\frac{\aqprod{z,z^{-1}}{q^2}{n}q^{2n}}{\aqprod{-q}{q}{2n}}	
	,\\
	S_{L9}(z;q)
	&=	
		\frac{\aqprod{q}{q^2}{\infty}}{\aqprod{z,z^{-1}}{q}{\infty}}
		\sum_{n=1}^\infty
		\frac{\aqprod{z,z^{-1}}{q}{n}q^{n}}{\aqprod{q}{q^2}{n}}		
	,\\
	S_{L12}(z;q)
	&=
		\frac{\aqprod{-q}{q^2}{\infty}^2}{\aqprod{z,z^{-1}}{q^2}{\infty}}
		\sum_{n=1}^\infty
		\frac{\aqprod{z,z^{-1}}{q^2}{n}q^{2n}}{\aqprod{-q}{q^2}{n}\aqprod{-q}{q^2}{n+1}}	
	.
\end{align*}
These seven functions are our spt-crank functions.
By setting $z=1$ and simplifying the products, we obtain our smallest parts 
functions.
\begin{align*}
	S_{G1}(q)
	=
	\sum_{n=1}^\infty \spt{G1}{n}q^n	
		&=
		\sum_{n=1}^\infty
		\frac{q^n\aqprod{q^{4n+2}}{q^2}{\infty}}{\aqprod{q^{2n},q^{2n},q^{2n+1},q^{2n+2}}{q^2}{\infty}}
		\\&\quad		
		=
		\sum_{n=1}^\infty
		\frac{q^n}{(1-q^{2n})^2\aqprod{q^{2n+1}}{q}{\infty}}	
		\cdot\frac{1}{\aqprod{q^{2n+2}}{q^2}{\infty}}
		\cdot\frac{1}{\aqprod{q^{2n+2}}{q^2}{n}}		
	,\\
	S_{G2}(q)
	=
		\sum_{n=1}^\infty \spt{G2}{n}q^n	
		&=
		\sum_{n=1}^\infty
		\frac{q^n\aqprod{q^{2n+1}}{q}{\infty}}{\aqprod{q^n}{q}{\infty}^2\aqprod{-q^{2n+1}}{q^2}{\infty}}
		\\		
		&=
		\sum_{n=1}^\infty
		\frac{q^n}{(1-q^n)^2\aqprod{q^{n+1}}{q}{n}\aqprod{q^{2n+2}}{q^2}{\infty}}			
		\cdot		
		\frac{1}{\aqprod{q^{n+1}}{q}{n}\aqprod{q^{4n+2}}{q^4}{\infty}}
	,\\
	S_{F1}(q)
	=
		\sum_{n=1}^\infty \spt{F1}{n}q^n	
		&=
		\sum_{n=1}^\infty
		\frac{q^{\frac{n(n-1)}{2}}\aqprod{q^{2n+1}}{q}{\infty}}{\aqprod{q^n,q^n,-q^{n+1}}{q}{\infty}}
		\\
		&=
		\frac{1}{(1-q)^2\aqprod{q^2}{q^2}{\infty}}		
		+
		\frac{q}{(1-q^2)^2(1-q^3)\aqprod{q^4}{q^2}{\infty}}		
			\\&\quad
			+
			\sum_{n=3}^\infty
			\frac{q^n }{(1-q^n)^2\aqprod{q^{n+1}}{q}{n}\aqprod{q^{2n+2}}{q^2}{\infty}}		
			\cdot q^{\frac{n(n-3)}{2}}
	,\\
	S_{G3}(q)
	=
		\sum_{n=1}^\infty \spt{G3}{n}q^n	
		&=
		\sum_{n=1}^\infty
		\frac{(-1)^nq^{\frac{n(n+1)}{2}}\aqprod{q^{2n+1}}{q}{\infty}}{\aqprod{q^n,q^n,q^{n+1}}{q}{\infty}}
		\\		
		&=		
		\sum_{n=1}^\infty
		\frac{(-1)^nq^{n}}{(1-q^n)^2\aqprod{q^{n+1}}{q}{\infty}}			
		\cdot\frac{q^{\frac{n(n-1)}{2}}}{\aqprod{q^{n+1}}{q}{\infty}}
		\cdot\frac{1}{\aqprod{q^{n+1}}{q}{n}}
	,\\
	S_{L7}(q)
	=
	\sum_{n=1}^\infty\spt{L7}{n}q^n		
		&=
		\sum_{n=1}^\infty \frac{q^{2n}\aqprod{-q^{2n+1}}{q}{\infty}}{\aqprod{q^{2n}}{q^2}{\infty}^2}
		=
		\sum_{n=1}^\infty \frac{q^{2n}}{(1-q^{2n})^2\aqprod{q^{2n+2}}{q^2}{\infty}}
		\cdot\frac{\aqprod{-q^{2n+1}}{q}{\infty}}{\aqprod{q^{2n+2}}{q^2}{\infty}}
	,\\
	S_{L9}(q)
	=
	\sum_{n=1}^\infty\spt{L9}{n}q^n		
		&=
		\sum_{n=1}^\infty \frac{q^{n}\aqprod{q^{2n+1}}{q^2}{\infty}}{\aqprod{q^{n}}{q}{\infty}^2}
		=
		\sum_{n=1}^\infty \frac{q^{n}}{(1-q^{n})^2\aqprod{q^{n+1}}{q}{\infty}}
		\cdot\frac{1}{\aqprod{q^{n+1}}{q}{n}\aqprod{q^{2n+2}}{q^2}{\infty}}
	,\\
	S_{L12}(q)
	=
	\sum_{n=1}^\infty\spt{L12}{n}q^n		
		&=
		\sum_{n=1}^\infty \frac{q^{2n}\aqprod{-q^{2n+1},-q^{2n+3}}{q^2}{\infty}}{\aqprod{q^{2n}}{q^2}{\infty}^2}
		\\&\quad		
		=
		\sum_{n=1}^\infty \frac{q^{2n}\aqprod{-q^{2n+1}}{q^2}{\infty}}{(1-q^{2n})^2\aqprod{q^{2n+2}}{q^2}{\infty}}
		\cdot\frac{\aqprod{-q^{2n+3}}{q^2}{\infty}}{\aqprod{q^{2n+2}}{q^2}{\infty}}
.
\end{align*}

We now give the combinatorial interpretations of these functions.
We recall that an overpartition is a partition where the first occurence of
a part may be overlined. For example the overpartitions of $3$ are
$3$, $\overline{3}$, $2+1$, $2+\overline{1}$, $\overline{2}+1$, 
$\overline{2}+\overline{1}$, $1+1+1$, and $\overline{1}+1+1$.
We say a vector $(\pi_1,\dots,\pi_k)$, where each $\pi_i$ is a partition
or overpartition, is a vector partition of $n$
if altogether the parts of the $\pi_i$ sum to $n$.
For a partition,
or overpartition, $\pi$ we 
let $s(\pi)$ denote the smallest part of a $\pi$,
$spt(\pi)$ the number of times $s(\pi)$ appears, and 
$\ell(\pi)$ the largest part of $\pi$. 
We use the convention that the empty partition has smallest part $\infty$ and
largest part $0$. Rather than interpret these functions in the order of their definitions, we 
begin with the simplest.

We see $\spt{L7}{n}$ is the number of occurrences of the smallest part in
the partition pairs $(\pi_1,\pi_2)$ of $n$, where $\pi_1$ is a partition into even parts, 
$\pi_2$ is an overpartition
with all non-overlined parts even, and $s(\pi_1)<s(\pi_2)$.
We see $\spt{L9}{n}$ is the number of occurrences of the smallest part in
the partition pairs $(\pi_1,\pi_2)$ of $n$, where $s(\pi_1)<s(\pi_2)$
and all parts of $\pi_2$ larger than $2s(\pi_2)$ must be even.
We see $\spt{L12}{n}$ is the number of occurrences of the smallest part in
the partition pairs $(\pi_1,\pi_2)$ of $n$, where
the odd parts of $\pi_1$ do not repeat, the odd parts of $\pi_2$ do not repeat,
$s(\pi_1)$ is even,  $s(\pi_1)<s(\pi_2)$, and the odd parts of $\pi_2$
are at least $s(\pi_1)+3$.
We see $\spt{G2}{n}$ is the number of occurrences of the smallest part
in the partition pairs $(\pi_1,\pi_2)$ of $n$, where
$s(\pi_1)<s(\pi_2)$,
the parts of $\pi_1$ larger than $2s(\pi_1)$ must be even,
the parts of $\pi_2$ larger than $2s(\pi_1)$ must be $2$ modulo $4$,
and $\pi_2$ has no parts in the interval $(2s(\pi_1),4s(\pi_1)+2)$.

To interpret $\spt{G1}{n}$, we first note that
\begin{align*}
	\frac{q^n}{(1-q^{2n})^2}
	&=
	q^{n}+2q^{3n}+3q^{5n}+4q^{7n}+\dots
	.
\end{align*}
We see $\spt{G1}{n}$ is a weighted count on certain partition triples
$(\pi_1,\pi_2,\pi_3)$ of $n$. Here the restrictions are $spt(\pi_1)$ is odd,
$\pi_1$ has no parts in the interval $(s(\pi_1),2s(\pi_1)+1)$,
$\pi_2$ and $\pi_3$
are partitions with only even parts, $2s(\pi_1)<s(\pi_2)$, $2s(\pi_1)<s(\pi_3)$,
and $\ell(\pi_3)\le 4s(\pi_1)$. 
These partitions tripled are weighted by $\frac{spt(\pi_1)+1}{2}$,
rather than by just $spt(\pi_1)$.

We see $\spt{G3}{n}$ is a weighted count on certain partition triples
$(\pi_1,\pi_2,\pi_3)$ of $n$. Here the restrictions are $\pi_2$ is a partition
where the parts $1,2,\dots,s(\pi_1)-1$ appear exactly once and $s(\pi_1)$
does not appear as a part of $\pi_2$ (with the understanding that this only 
means $s(\pi_2)\ge 2$ when $s(\pi_1)=1$), 
$s(\pi_1)<s(\pi_3)$, and $\ell(\pi_3)\le 2s(\pi_1)$. These partition triples
are weighted by $(-1)^{s(\pi_1)}spt(\pi_1)$. 

We view $\spt{F1}{n}$ as a sum of three functions. The first is 
a weighted count on the partitions $\pi$ of $n$, where $1$ may appear as a 
part but all other parts are even. For this first function these partitions 
are weighted by 
one more than the number of
times the part $1$ appears. The second function is a weighted count on the 
partitions $\pi$ of $n$, where $1$ and $3$ may appear as parts but all other
parts are even and larger than $2$,
and $1$ must appear an odd number of times. For this second function these
partitions are weighted by
$\frac{spt(\pi)+1}{2}$, and we note $spt(\pi)$ is the number of ones.
The third function is a weighted count on the number of partition pairs
$(\pi_1,\pi_2)$ of $n$, where $s(\pi_1)\ge 3$, the parts of
$\pi_1$ larger than $2s(\pi_1)$ must be even, and $\pi_2$ consists of exactly
one copy of $2,3,\dots,s(\pi_1)-2$ (with the understanding that $\pi_2$ is
 the empty partition when $s(\pi_1)=3$). For this third function these
partition pairs are weighted by $spt(\pi_1)$.

\begin{theorem}\label{TheoremCongruences}
For $n\ge0$, we have the following congruences,
\begin{align*}
	\spt{G1}{3n} &\equiv 0\pmod{3}
	,\\	
	\spt{G2}{3n} &\equiv 0\pmod{3}
	,\\
	\spt{G2}{3n+2} &\equiv 0\pmod{3}
	,\\
	\spt{L7}{3n+1} &\equiv 0\pmod{3}
	,\\	
	\spt{L9}{3n+2} &\equiv 0\pmod{3}
	,\\		
	\spt{L12}{3n} &\equiv 0\pmod{3}	
	,\\
	\spt{F1}{10n+9} &\equiv 0\pmod{5}
	,\\
	\spt{G3}{5n+3} &\equiv 0\pmod{5}
	.
\end{align*}
\end{theorem}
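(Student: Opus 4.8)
The plan is to prove each congruence through the corresponding spt-crank function, exploiting the fact that setting $z$ to a root of unity turns the crank refinement into a divisibility statement. Write $S_\bullet(z,q)=\sum_{n}a_n(z)q^n$ for one of the seven crank functions. Since each is built from the symmetric combination $\aqprod{z,z^{-1}}{q}{n}$, every coefficient $a_n(z)$ is a symmetric Laurent polynomial in $z$ with integer coefficients, and $a_n(1)=\spt{\bullet}{n}$. Grouping the coefficients of $a_N(z)$ according to the residue of their exponent modulo a prime $p$, setting $b_r=\sum_{j\equiv r}c_{N,j}$ (the number of objects whose crank is $\equiv r\pmod p$), and using that $\{1,\zeta_p,\dots,\zeta_p^{p-2}\}$ is an integral basis of $\mathbb{Z}[\zeta_p]$, one gets $a_N(\zeta_p)=\sum_{r=0}^{p-2}\Parans{b_r-b_{p-1}}\zeta_p^r$. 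Hence the key principle: if the coefficient of $q^N$ in $S_\bullet(\zeta_p,q)$ is zero, then all $b_r$ are equal, so $\spt{\bullet}{N}=p\,b_{p-1}\equiv 0\pmod p$. Every congruence in Theorem~\ref{TheoremCongruences} thereby reduces to showing that one arithmetic-progression coefficient of $S_\bullet(\zeta_p,q)$ vanishes.

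For the six congruences modulo $3$ I would substitute $z=\zeta_3$, a primitive cube root of unity. Here the substitution telescopes: since $\Parans{1-\zeta_3 q^j}\Parans{1-\zeta_3^{-1}q^j}=1+q^j+q^{2j}=\tfrac{1-q^{3j}}{1-q^j}$, one finds $\aqprod{z,z^{-1}}{q}{n}=3\,\tfrac{\aqprod{q^3}{q^3}{n-1}}{\aqprod{q}{q}{n-1}}$ and $\aqprod{z,z^{-1}}{q}{\infty}=3\,\tfrac{\aqprod{q^3}{q^3}{\infty}}{\aqprod{q}{q}{\infty}}$, with the analogous identities in base $q^2$ for $S_{G1},S_{L7},S_{L12}$. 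After the factors of $3$ cancel between the prefactor and the summand, each of $S_{G1},S_{G2},S_{L7},S_{L9},S_{L12}$ collapses to an explicit eta-quotient times a simpler hypergeometric sum, which the Bailey-pair machinery of Sections~4--5 should evaluate to a theta-type expression. The remaining task is a $3$-dissection: using the Jacobi triple product (and, where the base is $q^2$, the quintuple product) I would show that the exponents in the resulting theta expression omit the target residue class, so that the coefficient of $q^{3n}$ (respectively $q^{3n+1}$ for $S_{L7}$ and $q^{3n+2}$ for $S_{G2},S_{L9}$) is zero. Establishing these dissections is the bulk of the labor but is mechanical once the telescoped forms are in hand.

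The genuinely harder cases are the two congruences modulo $5$, namely $\spt{F1}{10n+9}\equiv 0\pmod 5$ and $\spt{G3}{5n+3}\equiv 0\pmod 5$, and I expect the dissection at $z=\zeta_5$ to be the main obstacle. Unlike the cube-root case, $\Parans{1-\zeta_5 q^j}\Parans{1-\zeta_5^{-1}q^j}=1-\Parans{\zeta_5+\zeta_5^{-1}}q^j+q^{2j}$ does not telescope, because $\zeta_5+\zeta_5^{-1}=\tfrac{-1+\sqrt5}{2}$ is irrational and the two conjugates $\zeta_5^2,\zeta_5^3$ are absent, so $S_{F1}(\zeta_5,q)$ and $S_{G3}(\zeta_5,q)$ will not reduce to a bare eta-quotient. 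Instead I would express them as $\mathbb{Z}[\zeta_5]$-combinations of theta functions, most naturally by first specializing the four-variable $q$-hypergeometric identities announced in the introduction and only then setting $z=\zeta_5$. With such a theta decomposition in hand I would perform a $5$-dissection and verify that the $10n+9$ (respectively $5n+3$) coefficient cancels; tracking coefficients in $\mathbb{Z}[\zeta_5]$ and confirming exact vanishing, rather than mere divisibility by $1-\zeta_5$, is where the delicate bookkeeping lies. Once the vanishing is secured, the root-of-unity principle of the first paragraph closes each of the eight congruences.
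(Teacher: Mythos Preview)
Your reduction of each congruence to the vanishing of a coefficient of $S_\bullet(\zeta_p,q)$ is exactly the paper's strategy, and your first paragraph matches the discussion surrounding Theorem~\ref{TheoremCranks}. Where you diverge is in how to obtain that vanishing. You propose to set $z=\zeta_3$ in the \emph{defining} hypergeometric sums, telescope $\aqprod{\zeta_3,\zeta_3^{-1}}{q}{n}$ into a ratio of $q$-Pochhammer symbols, and then hope the resulting one-variable series succumbs to further Bailey-pair manipulations and a theta dissection. The paper instead first proves Theorem~\ref{TheoremMain} for \emph{general} $z$, in which every term of the transformed series carries an explicit factor $(1-z^j)(1-z^{j-1})z^{1-j}$. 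Specializing $z=\zeta_p$ then annihilates all terms except those with $j\equiv 2\pmod 3$ (respectively $j\equiv 2,3,4\pmod 5$), after which one simply reads off which residues mod $p$ the surviving $q$-exponents occupy. No further Bailey input or theta dissection is needed for any of the mod-$3$ cases or for $S_{G3}$; the only auxiliary facts used are $\aqprod{q,\zeta_3 q,\zeta_3^{-1}q}{q}{\infty}=\aqprod{q^3}{q^3}{\infty}$, Gauss's identity $\aqprod{q^2}{q^2}{\infty}/\aqprod{q}{q^2}{\infty}=\sum_{n\ge0} q^{n(n+1)/2}$ for the prefactor in $S_{G1}$, and Garvan's $5$-dissection of $1/\aqprod{\zeta_5 q,\zeta_5^{-1}q}{q}{\infty}$ for $S_{G3}$.

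In particular, your worry that the mod-$5$ cases are substantially harder because $\aqprod{\zeta_5,\zeta_5^{-1}}{q}{n}$ does not telescope evaporates once Theorem~\ref{TheoremMain} is available: the $(1-z^j)(1-z^{j-1})$ mechanism is indifferent to whether $\zeta_p+\zeta_p^{-1}$ is rational. The one case that genuinely requires a product dissection is $S_{F1}$, and there the paper bypasses the series altogether in favor of the closed product form~(\ref{EqTheoremS3Product}); the $5$-dissection is then of three explicit Jacobi triple products, routine and without any delicate $\mathbb{Z}[\zeta_5]$-bookkeeping. Your route could likely be completed, but it trades the single uniform identity of Theorem~\ref{TheoremMain} for several ad hoc evaluations whose feasibility you have asserted rather than demonstrated.
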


To prove the congruences of Theorem \ref{TheoremCongruences} we will prove certain 
identities for the spt-crank two variable series. To explain this,
for $X=L_i,F_1,G_i$
we write
\begin{align*}
	S_{X}(z,q) &=\sum_{n=1}^\infty\sum_{m=-\infty}^\infty M_{X}(m,n)z^mq^n
.
\end{align*}
Whatever each $M_X(m,n)$ is counting is the spt-crank associated to the function
$\spt{X}{n}$.
We define the additional functions
\begin{align*}
	M_{X}(k,t,n) &= \sum_{m\equiv k\pmod{t}}M_{X}(m,n)
.
\end{align*}

For now we consider just $S_{L7}(z,q)$, as the explanations for the other
six functions are identical.
Since $S_{L7}(q)=S_{L7}(1,q)$, we have that
\begin{align*}
	\spt{L7}{n} &= \sum_{k=0}^{t-1} M_{L7}(k,t,n).
\end{align*}
Next with $\zeta_t$ a $t$-th root of unity, we have
\begin{align*}
	S_{L7}(\zeta_t,q)
	&=
	\sum_{n=1}^\infty \left( \sum_{k=0}^{t-1} M_{L7}(k,t,n) \zeta_t^k \right) q^n
	.
\end{align*}
When $t$ is prime and $\zeta_t$ is primitive, the minimal polynomial for 
$\zeta_t$ is $1+x+x^2+\dots+x^{t-1}$. So if the coefficient of $q^N$ in 
$S_{L7}(\zeta_t,q)$ is zero, then
\begin{align*}
	M_{L7}(0,t,N)=M_{L7}(1,t,N)=M_{L7}(2,t,N)=\dots=M_{L7}(t-1,t,N)=\frac{1}{t}\spt{L7}{N}
.
\end{align*}
Since the $M_{L7}(k,t,n)$ are integers, we clearly have $\spt{L7}{N}\equiv 0\pmod{t}$.

That is to say, one way to prove $\spt{L7}{3n+1}\equiv 0\pmod{3}$ is to instead
prove the stronger result that $M_{L7}(0,3,3n+1)=M_{L7}(1,3,3n+1)=M_{L7}(2,3,3n+1)$.
We show these values of the spt-crank are equal by showing the
coefficient of $q^{3n+1}$ in $S_{L7}(\zeta_3,q)$ is zero.
In Section 4 we prove that the coefficients of
$q^{3n+1}$ in $S_{L7}(\zeta_3,q)$,
$q^{3n+2}$ in $S_{L9}(\zeta_3,q)$,
$q^{3n}$ in $S_{L12}(\zeta_3,q)$,
$q^{3n}$ in $S_{G1}(\zeta_3,q)$,
$q^{3n}$ in $S_{G2}(\zeta_3,q)$,
$q^{3n+2}$ in $S_{G2}(\zeta_3,q)$,
$q^{10n+9}$ in $S_{F1}(\zeta_5,q)$, and
$q^{5n+3}$ in $S_{G3}(\zeta_5,q)$
are all zero. This establishes the following Theorem and Theorem
\ref{TheoremCongruences} as a corollary. 
\begin{theorem}\label{TheoremCranks}
For $n\ge 0$, the spt-cranks satisfy the following equalities,
\begin{align*}	
	M_{L7}(0,3,3n+1)&=M_{L7}(1,3,3n+1)=M_{L7}(2,3,3n+1)	
	,\\	
	M_{L9}(0,3,3n+2)&=M_{L9}(1,3,3n+2)=M_{L9}(2,3,3n+2)	
	,\\	
	M_{L12}(0,3,3n)&=M_{L12}(1,3,3n)=M_{L12}(2,3,3n)	
	,\\	
	M_{G1}(0,3,3n)&=M_{G1}(1,3,3n)=M_{G1}(2,3,3n)	
	,\\	
	M_{G2}(0,3,3n)&=M_{G2}(1,3,3n)=M_{G2}(2,3,3n)	
	,\\	
	M_{G2}(0,3,3n+2)&=M_{G2}(1,3,3n+2)=M_{G2}(2,3,3n+2)	
	,\\	
	M_{F1}(0,5,10n+9)&=M_{F1}(1,5,10n+9)=M_{F1}(2,5,10n+9)=M_{F1}(3,5,10n+9)=M_{F1}(4,5,10n+9)	
	,\\	
	M_{G3}(0,5,5n+3)&=M_{G3}(1,5,5n+3)=M_{G3}(2,5,5n+3)=M_{G3}(3,5,5n+3)=M_{G3}(4,5,5n+3)	
.
\end{align*}
\end{theorem}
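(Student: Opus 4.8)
The plan is to reduce Theorem~\ref{TheoremCranks} to the coefficient–vanishing statements isolated in the discussion preceding it. By the root-of-unity filter recorded there, each asserted chain of equalities among the $M_X(k,t,N)$ is equivalent to the vanishing of a single coefficient of $S_X(\zeta_t,q)$ --- the coefficient of $q^{3n+1}$ in $S_{L7}(\zeta_3,q)$, of $q^{3n+2}$ in $S_{L9}(\zeta_3,q)$, and so on through $q^{5n+3}$ in $S_{G3}(\zeta_5,q)$. Thus the whole theorem follows once I produce, for each of the eight functions, a closed form for $S_X(\zeta_t,q)$ from which the relevant coefficient is manifestly zero. First I would substitute $z=\zeta_t$ into the two-variable series and simplify the symmetric products $\aqprod{z,z^{-1}}{q}{n}$ and $\aqprod{z,z^{-1}}{q}{\infty}$, together with their base-$q^2$ analogues.

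For the six congruences modulo $3$ this step is clean. Since $\zeta_3+\zeta_3^{-1}=-1$, every factor collapses, $(1-\zeta_3 q^j)(1-\zeta_3^{-1}q^j)=1+q^j+q^{2j}$, so that
\begin{align*}
\aqprod{\zeta_3,\zeta_3^{-1}}{q}{n}=3\,\frac{\aqprod{q^3}{q^3}{n-1}}{\aqprod{q}{q}{n-1}},
\qquad
\aqprod{\zeta_3,\zeta_3^{-1}}{q}{\infty}=3\,\frac{\aqprod{q^3}{q^3}{\infty}}{\aqprod{q}{q}{\infty}},
\end{align*}
with the same identities in base $q^2$. The factor $3$ from the infinite product in each denominator cancels the factor $3$ generated inside the sum, leaving an explicit one-variable $q$-series for each of $S_{L7},S_{L9},S_{L12},S_{G1},S_{G2}$ at $z=\zeta_3$.

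Next I would evaluate each of these series as a theta quotient. This is where the $_2\phi_1$ notation fixed in the preliminaries, the summation and transformation formulas for basic hypergeometric series, and (for the $L$-functions) the conjugate Bailey pair identities of Lovejoy should combine to give a closed form. The goal is to write each $S_X(\zeta_t,q)$ as $q^{c}$ times a theta function, or a finite combination of such, whose $q$-expansion is supported on exponents of the shape $am^2+bm$; the $t$-dissection then reduces to the arithmetic check that $am^2+bm+c$ never lies in the forbidden residue class modulo $t$, forcing the offending coefficient to be $0$. I would carry this out one function at a time.

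The hard part will be the two congruences modulo $5$, for $S_{F1}(\zeta_5,q)$ and $S_{G3}(\zeta_5,q)$, because the cyclotomic collapse above fails. Here $\zeta_5+\zeta_5^{-1}=\tfrac{-1+\sqrt5}{2}$, so
\begin{align*}
\aqprod{\zeta_5,\zeta_5^{-1}}{q}{n}=\prod_{j=0}^{n-1}\Parans{1-(\zeta_5+\zeta_5^{-1})q^j+q^{2j}}
\end{align*}
is not a ratio of infinite products in $q$; no factor of $5$ cancels and the reduced series remains a genuine $\zeta_5$-weighted combination. The main obstacle is therefore to find the correct summation or transformation that still turns $S_{F1}(\zeta_5,q)$ and $S_{G3}(\zeta_5,q)$ into theta-function closed forms. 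Once such forms are in hand, the $5$-dissection that annihilates the coefficient of $q^{10n+9}$ (respectively $q^{5n+3}$) should again follow from the quadratic-exponent analysis used in the modulo $3$ cases.
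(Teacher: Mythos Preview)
Your reduction to the coefficient-vanishing statements is exactly right, and that is also what the paper does. The gap is in how you intend to prove the vanishing. You propose to substitute $z=\zeta_t$ into the \emph{defining} sums for $S_X(z,q)$, collapse the finite products $\aqprod{\zeta_t,\zeta_t^{-1}}{q}{n}$ via cyclotomic identities, and then ``evaluate each of these series as a theta quotient'' by some unspecified $_2\phi_1$ summation or Lovejoy-type identity. That step is speculative: after the cyclotomic collapse the sums carry factors like $\aqprod{q^3}{q^3}{n-1}/\aqprod{q}{q}{n-1}$ against the remaining pieces of the original summand, and there is no off-the-shelf summation that turns these into single theta products. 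For the mod~$5$ cases you yourself note that the collapse fails and you have no concrete mechanism at all.

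The paper avoids this entirely by first proving Theorem~\ref{TheoremMain}, which rewrites each $S_X(z,q)$ so that the whole $z$-dependence sits in the factor $(1-z^j)(1-z^{j-1})z^{1-j}$. Substituting $z=\zeta_t$ into \emph{that} form kills every term except those with $j$ in one fixed residue class modulo~$t$ (e.g.\ $j\equiv 2\pmod 3$ for the modulo~$3$ cases, and $j\equiv 2,3,4\pmod 5$ for $S_{G3}$), after which the remaining $q$-exponent $q^{f(j,n)}$ is checked by hand to miss the forbidden residue class. For $S_{G1}$ one also needs Gauss's identity for $\aqprod{q^2}{q^2}{\infty}/\aqprod{q}{q^2}{\infty}$, for $S_{G3}$ one uses the $5$-dissection of $1/\aqprod{\zeta_5 q,\zeta_5^{-1}q}{q}{\infty}$ from Garvan, and for $S_{F1}$ the product identity (\ref{EqTheoremS3Product}) together with explicit triple-product dissections does the job. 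In short, the missing key lemma in your plan is Theorem~\ref{TheoremMain}; once you use those representations instead of the original definitions, the residue checks are immediate and no unidentified hypergeometric summations are needed.
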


The main tools to prove Theorem \ref{TheoremCranks} are the following 
identities. We note these are identities for all values of $z$, not just
for $z$ being a specific root of unity.
\begin{theorem}
The spt-crank generating functions can be expressed as the following series,
\label{TheoremMain}
\begin{align}
	\label{EqTheoremSL7}	
	&S_{L7}(z;q)
	\nonumber\\	
		&=
		\frac{1}{(1+z)\aqprod{q^2,z,z^{-1}}{q^2}{\infty}}
		\sum_{j=1}^\infty
		\sum_{n=0}^\infty
		(1-z^j)(1-z^{j-1})z^{1-j} (-1)^{j+1}(1+q^{2j-1})q^{j(j-1) +\frac{n(n-1)}{2}+2jn}
	,\\
	\label{EqTheoremSL9}	
	&S_{L9}(z;q)
		=
		\frac{1}{(1+z)\aqprod{q,z,z^{-1}}{q}{\infty}}
		\sum_{j=1}^\infty
		\sum_{n=0}^\infty
		(1-z^j)(1-z^{j-1})z^{1-j} (-1)^{j+n+1}(1-q^{2j-1})q^{\frac{j(j-1)}{2} + n^2 + 2jn}
	,\\
	\label{EqTheoremSL12}	
	&S_{L12}(z;q)
		=
		\frac{1}{(1+z)\aqprod{q^2,z,z^{-1}}{q^2}{\infty}}
		\sum_{j=1}^\infty
		\sum_{n=0}^\infty
		(1-z^j)(1-z^{j-1})z^{1-j} (-1)^{j+1}(1+q^{2j-1})q^{j(j-1) + n^2 + 2jn}
	,\\
	\label{EqTheoremF}	
	&F(\rho_1,\rho_2,z;q)
		\nonumber\\
		&=
		\sum_{j=1}^\infty
			\frac{(1-z^j)(1-z^{j-1})z^{1-j} (-1)^{j+1}q^{j(j-1)/2}\aqprod{\rho_1,\rho_2}{q}{j-1} 
				\aqprod{ \tfrac{q^{j+1}}{\rho_1}, \tfrac{q^{j+1}}{\rho_2} }{q}\infty	}
			{(1+z) \rho_1^{j-1}\rho_2^{j-1}\aqprod{z,z^{-1}, \rho_1,\rho_2,\tfrac{q}{\rho_1\rho_2}}{q}{\infty}}	
		\nonumber\\&\quad\times
		\left(1 - \tfrac{q^j}{\rho_1} - \tfrac{q^j}{\rho_2} + \tfrac{q^{3j-1}}{\rho_1} + \tfrac{q^{3j-1}}{\rho_2}-q^{4j-2}\right)
	,\\
	\label{EqTheoremG}	
	&G(\rho_1,\rho_2,z;q)
		=
		\sum_{j=1}^\infty
		\frac{(1-z^j)(1-z^{j-1})z^{1-j} (-1)^{j+1}(1-q^{2j-1})q^{\frac{j(j+1)}{2}-1}\aqprod{\rho_1,\rho_2}{q}{j-1} 
			\aqprod{\tfrac{q^{j+1}}{\rho_1},\tfrac{q^{j+1}}{\rho_2}}{q}\infty	}
		{(1+z)  \rho_1^{j-1}\rho_2^{j-1}\aqprod{z,z^{-1}, \rho_1,\rho_2,\tfrac{q^2}{\rho_1\rho_2}}{q}{\infty}}
	,\\
	\label{EqTheoremFInfinity}	
	&F(\rho,z;q)
		=
		\sum_{j=1}^\infty
			\frac{(1-z^j)(1-z^{j-1})z^{1-j} q^{(j-1)^2}\aqprod{\rho}{q}{j-1} 
				\aqprod{ \tfrac{q^{j+1}}{\rho} }{q}\infty	}
			{(1+z)  \rho^{j-1}\aqprod{z,z^{-1}, \rho}{q}{\infty}}	
		\left(1 - \tfrac{q^j}{\rho} + \tfrac{q^{3j-1}}{\rho} -q^{4j-2}\right)
	,\\
	\label{EqTheoremGInfinity}	
	&G(\rho,z;q)
		=
		\sum_{j=1}^\infty
		\frac{(1-z^j)(1-z^{j-1})z^{1-j} (1-q^{2j-1})q^{j(j-1)}\aqprod{\rho}{q}{j-1} 
			\aqprod{\tfrac{q^{j+1}}{\rho}}{q}\infty	}
		{(1+z)  \rho^{j-1} \aqprod{z,z^{-1}, \rho}{q}{\infty}}
	,\\
	\label{EqTheoremS1}
	&S_{G1}(z,q)
		=
		\frac{1}{(1+z)\aqprod{z,z^{-1},q}{q^2}{\infty}}
		\sum_{j=1}^\infty 
		(1-z^j)(1-z^{j-1})z^{1-j}(-1)^{j+1}(1+q^{2j-1})q^{(j-1)^2}
	,\\
	\label{EqTheoremS2}
	&S_{G2}(z,q)	
		=
		\frac{1}{(1+z)\aqprod{z,z^{-1},q}{q}{\infty}}
		\sum_{j=-\infty}^\infty 
		\frac{(1-z^j)(1-z^{j-1})z^{1-j}(-1)^{j+1}q^{\frac{j(j-1)}{2}}}{(1+q^{2j-1})}
	,\\
	\label{EqTheoremS3}
	&S_{F1}(z,q)	
		=
		\frac{1}{(1+z)\aqprod{z,z^{-1}}{q}{\infty}}
		\sum_{j=-\infty}^\infty (1-z^j)(1-z^{j-1})z^{1-j}(-1)^{j+1}q^{j^2-3j+2}(1+q^{j-1})
		\\
		\label{EqTheoremS3Product}		
		&=		
		\frac{\aqprod{z^{-1}q^2,zq^2,q^2}{q^2}{\infty}}{\aqprod{zq,z^{-1}q}{q}{\infty}}		
		+
		\frac{\aqprod{z^{-1}q,zq,q^2}{q^2}{\infty}}{\aqprod{z,z^{-1}}{q}{\infty}}	
		-
		\frac{\aqprod{q,q,q^2}{q^2}{\infty}}{\aqprod{z,z^{-1}}{q}{\infty}}	
	,\\	
	\label{EqTheoremS4}
	&S_{G3}(z,q)	
		=
		\frac{1}{(1+z)\aqprod{z,z^{-1}}{q}{\infty}}
		\sum_{j=1}^\infty (1-z^j)(1-z^{j-1})z^{1-j}(1-q^{2j-1})q^{(j-1)^2}
.	
\end{align}
\end{theorem}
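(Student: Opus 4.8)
The plan is to derive every identity in Theorem \ref{TheoremMain} from a single Bailey pair together with Bailey's lemma, with only the conjugate Bailey pair changing from line to line. Concretely, I would organize everything around the Bailey pair relative to $a=1$ whose $\beta$-entry is $\beta_n=(z,z^{-1};q)_n/(q;q)_{2n}$; its $\alpha$-entry is the common source of the signature factor $(1-z^j)(1-z^{j-1})z^{1-j}/(1+z)$ appearing on the right-hand side of every identity. Recall that a Bailey pair relative to $a$ satisfies $\beta_n=\sum_{r=0}^n \alpha_r/[(q;q)_{n-r}(aq;q)_{n+r}]$, and that Bailey's lemma gives $\sum_{n\ge 0}\alpha_n\gamma_n=\sum_{n\ge 0}\beta_n\delta_n$ whenever $(\gamma_n,\delta_n)$ is a conjugate Bailey pair relative to the same $a$. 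Since each of the seven series is, up to an explicit product prefactor, of the form $\sum_{n\ge 1}\beta_n\delta_n$ for a suitable $\delta_n$, Bailey's lemma converts it into $\sum_{n\ge 1}\alpha_n\gamma_n$, and the latter is exactly the claimed $j$-indexed (or $(j,n)$-indexed) sum after relabelling $n\mapsto j$.

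First I would establish the base Bailey pair. Applying Bailey inversion to $\beta_n=(z,z^{-1};q)_n/(q;q)_{2n}$ writes $\alpha_n$ as a single terminating $q$-sum, which I expect to collapse to closed form via a standard summation (a $q$-Saalsch\"utz or terminating very-well-poised ${}_6\phi_5$ evaluation); the outcome should be $\alpha_0=1$ and, for $n\ge 1$, a multiple of $(1-z^n)(1-z^{n-1})z^{1-n}/(1+z)$ carrying the appropriate sign and power of $q$. Two bookkeeping points need care: the functions $S_X$ are defined with sums starting at $n=1$, so the $n=0$ term (equal to $\alpha_0=1$) must be added and subtracted when passing to the $\sum_{n\ge 0}$ form Bailey's lemma requires; and the series $S_{L7},S_{L12}$ live over the base $q^2$ while $S_{L9}$ and the $F,G$ families live over base $q$, so the same computation must be run at the correct base, replacing $q$ by $q^2$ where needed.

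Next I would treat the families in turn. For \eqref{EqTheoremF} and \eqref{EqTheoremG} the relevant conjugate Bailey pair is the classical two-parameter one attached to $\rho_1,\rho_2$, for which $\delta_n=(\rho_1,\rho_2;q)_n(q/(\rho_1\rho_2))^n$ (respectively with $q^2/(\rho_1\rho_2)$) and $\gamma_n$ is a known finite product; substituting $\alpha_n$ and this $\gamma_n$ into $\sum\alpha_n\gamma_n$ produces the stated single sums. The limits \eqref{EqTheoremFInfinity} and \eqref{EqTheoremGInfinity} then follow by letting $\rho_2\to\infty$ with the normalisation $(\rho_2;q)_\infty$ built into the definitions of $F(\rho,z;q)$ and $G(\rho,z;q)$, tracking the surviving power of $q$. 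The specializations \eqref{EqTheoremS1} ($S_{G1}=G(q,q^2,z;q^2)$ into \eqref{EqTheoremG}) and \eqref{EqTheoremS4} ($S_{G3}=G(q,z;q)$ into \eqref{EqTheoremGInfinity}) are direct substitutions in which the product $(q;q)_{j-1}(q^{j+1}/\rho;q)_\infty$ telescopes against $(q;q)_\infty$; \eqref{EqTheoremS2} ($S_{G2}$) and \eqref{EqTheoremS3} ($S_{F1}=F(-q,z;q)$) require the extra step of folding the one-sided $j$-sum, with its multi-term parenthetical factor, into a bilateral sum $\sum_{j=-\infty}^\infty$ by exploiting the symmetry of the summand under an appropriate reflection $j\mapsto c-j$ and reindexing. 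For \eqref{EqTheoremSL7}, \eqref{EqTheoremSL9}, \eqref{EqTheoremSL12} I would instead feed in Lovejoy's conjugate Bailey pairs $(1.7)$, $(1.9)$, $(1.12)$ of \cite{Lovejoy}; there $\gamma_j$ is itself presented as a sum, which is precisely why these three identities carry a double sum over $j$ and $n$. Finally, the product evaluation \eqref{EqTheoremS3Product} comes from applying the Jacobi triple product to each of the three bilateral theta sums obtained after separating the $(1+q^{j-1})$ factor in \eqref{EqTheoremS3}.

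The main obstacle is the evaluation of the inner sum that produces the closed-form $\alpha_n$ of the base Bailey pair: identifying the correct classical summation and checking that the sign, the power $q^{j(j-1)/2}$ (or $q^{(j-1)^2}$, $q^{j(j-1)}$, depending on base and family), and the factor $1/(1+z)$ all emerge exactly as written. Secondary difficulties are matching Lovejoy's conjugate pairs to the base-$q^2$ versus base-$q$ normalisations without sign or shift errors, justifying the $\rho_2\to\infty$ limit termwise, and carrying out the bilateralization for $S_{F1}$ and $S_{G2}$ cleanly, since there the one-sided sum only becomes a single-term bilateral sum after a nontrivial cancellation among the parenthetical terms. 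Once the base Bailey pair and these reductions are in hand, each identity is a bookkeeping specialization of Bailey's lemma.
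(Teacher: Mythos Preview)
Your plan rests on the claim that $\beta_n=(z,z^{-1};q)_n/(q;q)_{2n}$ is the $\beta$-side of a Bailey pair relative to $a=1$ whose $\alpha_n$ is, for $n\ge 1$, a multiple of $(1-z^n)(1-z^{n-1})z^{1-n}/(1+z)$. This is false. Direct inversion at $a=1$ gives
\[
\alpha_1=(1-q^2)\Bigl[-\tfrac{1}{1-q}\beta_0+(1-q)\beta_1\Bigr]
=-(1+q)+(1-z)(1-z^{-1})=1-q-z-z^{-1},
\]
which is not zero, whereas your predicted form forces $\alpha_1=0$ because $(1-z^{0})=0$. The factor $(1-z^j)(1-z^{j-1})z^{1-j}$ in every identity of Theorem~\ref{TheoremMain} does not come from $\alpha_j$; in the paper it arises only at the very end, from the algebraic identity $(z^j+z^{1-j})-(1+z)=(1-z^j)(1-z^{j-1})z^{1-j}$ after one subtracts the $z=1$ specialization. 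There is a second mismatch: with your $\beta_n$ fixed, the $\delta_n$ needed to reproduce the summand of $S_{L9}$ is $q^n(q^2;q^2)_n$, but Lovejoy's $(1.9)$ at $a=1$ supplies $\delta_n=q^n(q;q^2)_n$; similarly $(1.7)$ at $a^2q^2=1$ gives $\delta_n=(q;q)_nq^{2n}$ for $S_{L7}$, not the required $(q;q)_{2n}q^{2n}$. So the conjugate pairs you cite do not plug in at $a=1$ the way you assume.

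The paper proceeds differently. It first applies \eqref{EqGarvan41} to $(1+z)(z,z^{-1};q)_n/(q;q)_{2n}$ and reads off, for each fixed $j$, the coefficient of $z^j$ in $(1+z)(z,z^{-1};q)_\infty S_X(z,q)$. After the shift $n\mapsto n+j-1$ that coefficient becomes a sum of the form $\sum_{n\ge 0}\delta_n\,\beta^{*}_n(q^{2j-1},q)$ or $\sum_{n\ge 0}\delta_n\,\beta^{**}_n(q^{2j-2},q)$, i.e.\ the \emph{trivial} Bailey pairs \eqref{FirstBaileyPair}--\eqref{SecondBaileyPair} at a $j$-\emph{dependent} base. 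It is at this $j$-dependent base (e.g.\ $a=q^{2j-1}$ in \eqref{BaileyL9}, $a=q^{j-1}$ in \eqref{BaileyL7}, $a=q^{2j-1}$ in \eqref{BaileyL12}, and the rescaled forms \eqref{Bailey1}--\eqref{Bailey2} of Bailey's lemma for $F$ and $G$) that Lovejoy's identities are applied, and only afterwards are the resulting $z^j$-coefficients reassembled and the $z=1$ case subtracted to produce the signature factor. In short, the paper never computes the $\alpha$ you are looking for; it trades a nontrivial Bailey pair at a fixed base for trivial Bailey pairs at a moving base, and that is what makes the conjugate-pair identities match.
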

We note (\ref{EqTheoremFInfinity}) and (\ref{EqTheoremGInfinity}) follow 
by taking limits in (\ref{EqTheoremF}) and (\ref{EqTheoremG}) as in the 
definitions of $F(\rho,z;q)$ and $G(\rho,z;q)$.
It is worth pointing out that $S_{F1}(z,q)$ reducing to products tells us that
$S_{F1}(z,q)$ will be a modular form when $z$ is a root of unity. However,
the other spt-cranks with single series representations do not appear to reduce
to products. As such these functions will likely instead be false theta 
functions when $z$ is a root of unity.
The double series identities also have another interesting form.
\begin{theorem}\label{TheoremHeckeSeries}
The following spt-crank functions can be written as Hecke-Rogers double sums,
in particular,
\begin{align}
	\label{EqTheoremSL7Hecke}
	&S_{L7}(z;q)
		=
		\frac{1}{(1+z)\aqprod{q^2,z,z^{-1}}{q^2}{\infty}}
		\sum_{j=0}^\infty
		\sum_{n=-j}^j
		(1-z^{j-|n|+1})(1-z^{j-|n|})z^{|n|-j} (-1)^{j+n}q^{j(j+1) -\frac{n(n-1)}{2}}
	,\\
	\label{EqTheoremSL9Hecke}	
	&S_{L9}(z;q)
		=
		\frac{1}{(1+z)\aqprod{q,z,z^{-1}}{q}{\infty}}
		\sum_{j=0}^\infty
		\sum_{n=-\left\lfloor j/2 \right\rfloor }^{\left\lfloor j/2 \right\rfloor}
		(1-z^{j-2|n|+1})(1-z^{j-2|n|})z^{2|n|-j} (-1)^{j+n}q^{\frac{j(j+1)}{2} -n(n-1) }
	,\\
	\label{EqTheoremSL12Hecke}	
	&S_{L12}(z;q)
		=
		\frac{1}{(1+z)\aqprod{q^2,z,z^{-1}}{q^2}{\infty}}
		\sum_{j=0}^\infty
		\sum_{n=-j}^j
		(1-z^{j-|n|+1})(1-z^{j-|n|})z^{|n|-j} (-1)^{j+n}q^{j(j+1)+n}
.
\end{align}
\end{theorem}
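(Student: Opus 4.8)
The plan is to derive each Hecke--Rogers form directly from the corresponding double-series identity already recorded in Theorem~\ref{TheoremMain}, namely \eqref{EqTheoremSL7}, \eqref{EqTheoremSL9}, and \eqref{EqTheoremSL12}. Since in each case the single-signed form and the Hecke form carry the same infinite-product prefactor, it suffices to prove that the bare double sum over $j\ge1$, $n\ge0$ equals the symmetric double sum over the stated triangular region. The mechanism is a reindexing that unfolds the one-signed sum into a sum over both signs of the summation variable, with the factor $(1\pm q^{2j-1})$ supplying the two halves.

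Concretely, for \eqref{EqTheoremSL7} I would first split the factor $(1+q^{2j-1})$, writing the summand as a term carrying $q^{j(j-1)+\frac{n(n-1)}{2}+2jn}$ plus a term carrying the same power times $q^{2j-1}$. The key observation is that the $z$-dependence $(1-z^{j})(1-z^{j-1})z^{1-j}$ matches the target factor $(1-z^{j-|n|+1})(1-z^{j-|n|})z^{|n|-j}$ precisely under $j\mapsto j-|n|+1$; equivalently, writing $m=j-|n|$ in the Hecke sum recovers $(1-z^{m+1})(1-z^{m})z^{-m}$. I would therefore set $j_{\mathrm{old}}=j-|n|+1$ in the Hecke sum and treat $n\ge0$ and $n<0$ separately: the nonnegative values correspond to the ``$1$'' piece with $n_{\mathrm{old}}=n$, while the negative values $n=-(n_{\mathrm{old}}+1)$ correspond to the ``$q^{2j-1}$'' piece. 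A short computation shows that under these identifications the exponent $j(j+1)-\tfrac{n(n-1)}{2}$ collapses exactly to $j_{\mathrm{old}}(j_{\mathrm{old}}-1)+2j_{\mathrm{old}}n_{\mathrm{old}}+\tfrac{n_{\mathrm{old}}(n_{\mathrm{old}}-1)}{2}$ (and its shift by $2j_{\mathrm{old}}-1$ in the negative branch), while the sign $(-1)^{j+n}$ reduces to $(-1)^{j_{\mathrm{old}}+1}$ in both branches; one then checks that $(j_{\mathrm{old}},n_{\mathrm{old}})$ ranges bijectively over $j_{\mathrm{old}}\ge1$, $n_{\mathrm{old}}\ge0$ as $(j,n)$ runs over $-j\le n\le j$.

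The case \eqref{EqTheoremSL12} is essentially identical, the only change being that the $q$-exponent is $j(j-1)+n^2+2jn$ in the single-signed form and $j(j+1)+n$ in the Hecke form; the same substitution $j_{\mathrm{old}}=j-|n|+1$ and the same split of $(1+q^{2j-1})$ work verbatim, and the sign is again $(-1)^{j+1}$ independent of $n$. For \eqref{EqTheoremSL9} the region is $-\lfloor j/2\rfloor\le n\le\lfloor j/2\rfloor$ and the correct substitution is $j_{\mathrm{old}}=j-2|n|+1$, reflecting the $2|n|$ appearing in the $z$-exponent; here $j=j_{\mathrm{old}}-1+2|n|$ together with $j_{\mathrm{old}}\ge1$ forces $|n|\le j/2$, which automatically produces the floor in the summation range. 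In this case the relevant factor is $(1-q^{2j-1})$, so the extra minus sign accompanying the negative branch must be absorbed into the sign $(-1)^{j+n}$, which I would verify matches $D_1$'s sign $(-1)^{j_{\mathrm{old}}+n_{\mathrm{old}}+1}$ after the shift $n=-(n_{\mathrm{old}}+1)$.

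The calculations themselves are routine quadratic bookkeeping; the one place demanding care is the negative branch, where the ``$\pm q^{2j-1}$'' half forces the shift $n_{\mathrm{old}}=|n|-1$ rather than $n_{\mathrm{old}}=|n|$. I expect the main obstacle to be confirming that this shifted correspondence is a genuine bijection onto the full triangular (resp.\ floor-restricted) region with no term double-counted or omitted, and that the parity factors combine correctly once the additional minus sign coming from $(1-q^{2j-1})$ is accounted for in the $S_{L9}$ case.
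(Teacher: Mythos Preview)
Your proposal is correct and follows essentially the same approach as the paper's proof: the paper likewise starts from \eqref{EqTheoremSL7}--\eqref{EqTheoremSL12}, splits the factor $(1\pm q^{2j-1})$ into two pieces, sends $n\mapsto -1-n$ in the second piece to obtain a bilateral sum in $n$, and then performs the shift $j\mapsto j-|n|+1$ (respectively $j\mapsto j-2|n|+1$ for $S_{L9}$) followed by an interchange of summation. The only cosmetic difference is direction---you reindex the Hecke sum to recover the one-signed form, whereas the paper reindexes the one-signed form to reach the Hecke sum---but the substitutions, the handling of the negative branch via $n_{\mathrm{old}}=|n|-1$, and the bijection check are identical.
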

Hecke-Rogers double sums of this form recently arose in \cite{Garvan2} for
the Dyson rank of partitions, the Dyson rank of overpartitions, and the $M_2$-rank
of partitions without repeated odd parts. The identities for those ranks lead
to Hecke-Rogers series for certain related spt functions as well. One simple 
point to notice about the double series in Theorem \ref{TheoremMain} is that
summation indices are independent, but the power of $q$ is a quadratic in 
$j$ and $n$ with a cross term $jn$, whereas the double series in Theorem
\ref{TheoremHeckeSeries} have summation indices that are dependent but the
power of $q$ is a quadratic without a cross term.

Now that we have stated our results, we describe the $q$-series techniques we
need to prove these identities.
We will use Lemma 4.1 of \cite{Garvan2}, which is
\begin{align}
	\label{EqGarvan41}
	\frac{(1+z)\aqprod{z,z^{-1}}{q}{n}}{\aqprod{q}{q}{2n}}
	&=
	\sum_{j=-n}^{n+1}\frac{(-1)^{j+1}(1-q^{2j-1})z^jq^{\frac{j(j-3)}{2}+1} }
	{\aqprod{q}{q}{n+j}\aqprod{q}{q}{n-j+1}}				
.
\end{align}
We recall a pair of sequences $(\alpha,\beta)$ is a Bailey pair relative to
$(a,q)$ if
\begin{align*}
	\beta_n &= \sum_{k=0}^n \frac{\alpha_k}{\aqprod{q}{q}{n-k}\aqprod{aq}{q}{n+k}}
.
\end{align*}
Some authors suppress the dependence of $q$ in the definition of a Bailey pair,
however we will find this additional notation necessary.
We recall a limiting case of Bailey's Lemma states if
$(\alpha,\beta)$ is a Bailey pair relative to $(a,q)$ then
\begin{align}\label{EqBaileysLemma}
	\sum_{n=0}^\infty 
	\aqprod{\rho_1,\rho_2}{q}{n}\left(\tfrac{aq}{\rho_1\rho_2}\right)^n\beta_n
	&=
	\frac{\aqprod{aq/\rho_1,aq/\rho_2}{q}{\infty}}
		{\aqprod{aq,\tfrac{aq}{\rho_1\rho_2}}{q}{\infty}}
	\sum_{n=0}^\infty
	\frac{\aqprod{\rho_1,\rho_2}{q}{n} \left(\tfrac{aq}{\rho_1\rho_2}\right)^n \alpha_n  }
		{\aqprod{aq/\rho_1,aq/\rho_2}{q}{n}}
	.
\end{align}
Bailey pairs and Bailey's Lemma have a rich and varied history. They were 
introduced by Bailey in \cite{Bailey} in reproving the Rogers-Ramanujan 
identities and giving more identities of that type. 
Slater, a student of
Bailey, established what is now a standard list of Bailey pairs in
\cite{Slater1} and \cite{Slater2} and used them to prove a massive list of 
Roger-Ramanujan type identites. Rather than say too much of their history, 
recent developments, and applications, we refer the reader to Chapter 3 of 
\cite{Andrews2} and the survey articles 
\cite{Andrews3,McLaughlinSillsZimmer,Warnaar}.

\begin{lemma}
If $(\alpha,\beta)$ is a Bailey pair relative to $(a,q)$ then
\begin{align}	
	\label{BaileyL9}
	&\sum_{n=0}^\infty
	\aqprod{aq}{q^2}{n}q^n\beta_n
		=
		\frac{1}{\aqprod{aq^2}{q^2}{\infty}\aqprod{q}{q}{\infty}}
		\sum_{r,n\ge 0}(-a)^nq^{n^2+2rn+r+n}\alpha_r
	,\\
	\label{Bailey1}
	&\sum_{n=0}^\infty
		\aqprod{\rho_1\sqrt{a},\rho_2\sqrt{a}}{q}{n}	
		(\tfrac{q}{\rho_1\rho_2})^n
		\beta_n(a,q)
	=
	\frac{\aqprod{\sqrt{a}q/\rho_1,\sqrt{a}q/\rho_2}{q}{\infty}}
		{\aqprod{aq,\tfrac{q}{\rho_1\rho_2}}{q}{\infty}}
	\sum_{n=0}^\infty
	\frac{\aqprod{\rho_1\sqrt{a},\rho_2\sqrt{a}}{q}{n} (\tfrac{q}{\rho_1\rho_2})^n \alpha_n(a,q)}
		{\aqprod{\sqrt{a}q/\rho_1,\sqrt{a}q/\rho_2}{q}{n}}
	,\\
	\label{Bailey2}
	&\sum_{n=0}^\infty
		\aqprod{\rho_1\sqrt{a/q},\rho_2\sqrt{a/q}}{q}{n}	
		(\tfrac{q^2}{\rho_1\rho_2})^n
		\beta_n(a,q)
	\nonumber\\
	&=
	\frac{\aqprod{\sqrt{a}q^{\frac{3}{2}}/\rho_1,\sqrt{a}q^{\frac{3}{2}}/\rho_2}{q}{\infty}}
		{\aqprod{aq,\tfrac{q^2}{\rho_1\rho_2}}{q}{\infty}}
	\sum_{n=0}^\infty
	\frac{\aqprod{\rho_1\sqrt{a/q},\rho_2\sqrt{a/q}}{q}{n} (\tfrac{q^2}{\rho_1\rho_2})^n \alpha_n(a,q)}
		{\aqprod{\sqrt{a}q^{\frac{3}{2}}/\rho_1,\sqrt{a}q^{\frac{3}{2}}/\rho_2}{q}{n}}
.
\end{align}
If $(\alpha,\beta)$ is a Bailey pair relative to $(a^2q^2,q^2)$ then
\begin{align}
	\label{BaileyL7}
	\sum_{n=0}^\infty \aqprod{aq}{q}{n}q^{2n}\beta_n
	&=
	\frac{\aqprod{aq}{q}{\infty}}{\aqprod{a^2q^4}{q^2}{\infty}}
	\sum_{r,n\ge 0}
	\frac{q^{\frac{n(n+1)}{2}+2nr+2r+n}a^n}{1-aq^{2r+1}}\alpha_r	
	.
\end{align}
If $(\alpha,\beta)$ is a Bailey pair relative to $(a^2,q)$ then
\begin{align}
	\label{BaileyL12}
	\sum_{n=0}^\infty 
	\frac{\aqprod{a^2}{q}{2n}q^{n}}{\aqprod{a,aq}{q}{n}}\beta_n
	&=
	\frac{1}{\aqprod{q,aq,aq}{q}{\infty}}
	\sum_{r,n\ge 0}
	\frac{(-a)^nq^{\frac{n(n+1)}{2}+nr+r}(1+a)}{1+aq^r}\alpha_r	
	.
\end{align}
\end{lemma}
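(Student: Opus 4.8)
The plan is to split the five identities into two groups. Identities \eqref{Bailey1} and \eqref{Bailey2} are mere specializations of the limiting form of Bailey's Lemma \eqref{EqBaileysLemma}, while \eqref{BaileyL9}, \eqref{BaileyL7}, and \eqref{BaileyL12} genuinely require the conjugate Bailey pair machinery, using the conjugate Bailey pairs recorded as $(1.7)$, $(1.9)$, and $(1.12)$ in \cite{Lovejoy}.

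For \eqref{Bailey1} I would substitute $\rho_1\mapsto\rho_1\sqrt{a}$ and $\rho_2\mapsto\rho_2\sqrt{a}$ directly into \eqref{EqBaileysLemma}. Under this replacement $\tfrac{aq}{\rho_1\rho_2}\mapsto\tfrac{q}{\rho_1\rho_2}$ and $\tfrac{aq}{\rho_i}\mapsto\tfrac{\sqrt{a}q}{\rho_i}$, while $\aqprod{aq}{q}{\infty}$ is unchanged, so every factor matches the claimed identity with no further work. Identity \eqref{Bailey2} is obtained the same way with $\rho_1\mapsto\rho_1\sqrt{a/q}$ and $\rho_2\mapsto\rho_2\sqrt{a/q}$, which sends $\tfrac{aq}{\rho_1\rho_2}\mapsto\tfrac{q^2}{\rho_1\rho_2}$ and $\tfrac{aq}{\rho_i}\mapsto\tfrac{\sqrt{a}q^{3/2}}{\rho_i}$. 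Both are purely mechanical.

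For the remaining three I would first recall the conjugate Bailey pair duality. Calling $(\gamma,\delta)$ a conjugate Bailey pair relative to $(A,Q)$ when $\gamma_n=\sum_{k\ge n}\frac{\delta_k}{\aqprod{Q}{Q}{k-n}\aqprod{AQ}{Q}{k+n}}$, one has, for any Bailey pair $(\alpha,\beta)$ relative to the same $(A,Q)$, the duality $\sum_n\beta_n\delta_n=\sum_n\alpha_n\gamma_n$, proved by inserting the definition of $\beta_n$ and interchanging the order of summation. I would then read off the left-hand sides of \eqref{BaileyL9}, \eqref{BaileyL7}, and \eqref{BaileyL12} as $\sum_n\beta_n\delta_n$ with, respectively, $\delta_n=\aqprod{aq}{q^2}{n}q^n$ at base $(a,q)$, $\delta_n=\aqprod{aq}{q}{n}q^{2n}$ at base $(a^2q^2,q^2)$, and $\delta_n=\frac{\aqprod{a^2}{q}{2n}q^n}{\aqprod{a,aq}{q}{n}}$ at base $(a^2,q)$. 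The corresponding $\gamma_n$ are exactly the conjugate partners $(1.9)$, $(1.7)$, and $(1.12)$ of \cite{Lovejoy}; substituting each into $\sum_n\alpha_n\gamma_n$ and pulling the inner $n$-summation of $\gamma_n$ outside the outer $\alpha_r$-sum produces the stated double sums on the right.

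The main obstacle is bookkeeping at the shifted bases rather than anything conceptual. For \eqref{BaileyL7} and \eqref{BaileyL12} the duality must be applied with $(A,Q)=(a^2q^2,q^2)$ and $(A,Q)=(a^2,q)$, so the relevant products in the conjugate relation become $\aqprod{q^2}{q^2}{k-n}\aqprod{a^2q^4}{q^2}{k+n}$ and $\aqprod{q}{q}{k-n}\aqprod{a^2q}{q}{k+n}$; the care needed is in matching Lovejoy's variable names and normalizations to these bases and checking that the infinite-product prefactors appearing in his $\gamma_n$ coincide with the prefactors $\frac{1}{\aqprod{aq^2}{q^2}{\infty}\aqprod{q}{q}{\infty}}$, $\frac{\aqprod{aq}{q}{\infty}}{\aqprod{a^2q^4}{q^2}{\infty}}$, and $\frac{1}{\aqprod{q,aq,aq}{q}{\infty}}$ of the claimed identities. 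Once the bases and normalizations are aligned, each identity follows immediately from the duality.
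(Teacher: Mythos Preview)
Your proposal is correct and follows essentially the same approach as the paper: the paper's proof simply states that \eqref{BaileyL9}, \eqref{BaileyL7}, and \eqref{BaileyL12} are exactly identities (1.9), (1.7), and (1.12) of \cite{Lovejoy}, and that \eqref{Bailey1} and \eqref{Bailey2} follow from \eqref{EqBaileysLemma} via the substitutions $\rho_i\mapsto\rho_i\sqrt{a}$ and $\rho_i\mapsto\rho_i\sqrt{a/q}$, respectively. Your write-up supplies more of the underlying conjugate Bailey pair mechanism, but the argument is the same.
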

\begin{proof}
Equations (\ref{BaileyL9}), (\ref{BaileyL7}), and (\ref{BaileyL12})
are exactly (1.9), (1.7),  and (1.12) of \cite{Lovejoy}.
We find (\ref{Bailey1}) follows from (\ref{EqBaileysLemma}) by letting
$\rho_1\mapsto\rho_1\sqrt{a}$ and $\rho_2\mapsto\rho_2\sqrt{a}$ and
(\ref{Bailey2}) follows from (\ref{EqBaileysLemma}) by letting
$\rho_1\mapsto\rho_1\sqrt{a/q}$ and $\rho_2\mapsto\rho_2\sqrt{a/q}$.
\end{proof}

We only need the following two Bailey pairs relative to $(a,q)$,
\begin{align}
\label{FirstBaileyPair}
	\beta^*_n(a,q) &= \frac{1}{\aqprod{aq,q}{q}{n}}
	,
	&\alpha^*_n(a,q) &= \PieceTwo{1}{0}{n=0,}{n\ge 1,}
	\\
	\label{SecondBaileyPair}
	\beta^{**}_n(a,q) &= \frac{1}{\aqprod{aq^2,q}{q}{n}}
	,
	&\alpha^{**}_n(a,q) &= 
		\left\{
   		\begin{array}{ll}
      		1 & n=0 
      		,\\
       		-aq & n=1 
       		,\\
			0 & n\ge 2. 
     		\end{array}
		\right.
\end{align}
That these are Bailey pairs relative to $(a,q)$ follows immediately from the
definition of a Bailey pair. 
We next explain how our spt-cranks were found.

\section{The general idea}

Bailey's Lemma comes from Bailey's Transform, which states that if
\begin{align}\label{OriginalBaileyPairs}
	\beta_n 
	&= 
		\sum_{k=0}^n \alpha_k u_{n-k} v_{n+k}
	,&
	\gamma_n 
	&= 
		\sum_{k=n}^\infty \delta_k u_{k-n} v_{k+n}
	,
\end{align}
then
\begin{align*}
	\sum_{n=0}^\infty \beta_n\delta_n
	&=
	\sum_{n=0}^\infty \alpha_n\gamma_n
.
\end{align*}
If in (\ref{OriginalBaileyPairs}) we use $u_n=1/\aqprod{q}{q}{n}$ and
$v_n=1/\aqprod{aq}{q}{n}$, then we see the condition on $\alpha$ and $\beta$ is 
exactly that of being a Bailey pair. We refer to $(\gamma,\delta)$ as a 
conjugate Bailey pair. While the idea of a conjugate Bailey pair is clearly
built into the theory of Bailey pairs, they were almost completely ignored 
until Schilling and Warnaar brought attention to them in 
\cite{SchillingWarnaar}. A simple statement about conjugate Bailey pairs is
that for each conjugate Bailey pair, we have a new version of Bailey's Lemma.

Our method relies on applying these ``new'' versions of Bailey's Lemma. 
This was born out of the proofs of \cite{Garvan2} and 
\cite{GarvanJennings2}. In \cite{GarvanJennings2} we defined an spt-crank-type 
function to be a series of the form
\begin{align*}
	\frac{P(q)}{\aqprod{z,z^{-1}}{q}{\infty}}
	\sum_{n=1}^\infty \aqprod{z,z^{-1}}{q}{n}q^n\beta_n
,
\end{align*}  
where $P(q)$ is some infinite product and $\beta$ comes from a Bailey pair
relative to $(1,q)$. In that article and others, we chose a Bailey pair of 
Slater \cite{Slater1,Slater2},
applied Bailey's Lemma to the spt-crank-type function, and obtained a generalized
Lambert series and an infinite product to work with. Instead, here we take an spt-crank
function to be of the form
\begin{align*}
	S_A(z,q)
	&=
	\frac{P(q)}{\aqprod{z,z^{-1}}{q}{\infty}}
	\sum_{n=1}^\infty \aqprod{z,z^{-1}}{q}{n}q^n A_n(q)
,
\end{align*}  
and do not worry about $A$ occurring as part of a Bailey pair, we only require 
that it be a function of $q$ and $n$. We instead ask how
should we choose $A$ so that we can transform the spt-crank function 
with a conjugate Bailey pair identity using one of the Bailey pairs
$(\alpha^*,\beta^*)$ or $(\alpha^{**},\beta^{**})$. This method leads to the
series identities in Theorem \ref{TheoremMain}.

Suppose one takes a fixed conjugate Bailey pair and the 
resulting Bailey's Lemma type identity. To be explicit we take the identity 
(1.9) from \cite{Lovejoy},
\begin{align}\label{EqLovejoy19Again}
	&\sum_{n=0}^\infty
	\aqprod{aq}{q^2}{n}q^n\beta_n
		=
		\frac{1}{\aqprod{aq^2}{q^2}{\infty}\aqprod{q}{q}{\infty}}
		\sum_{r,n\ge 0}(-a)^nq^{n^2+2rn+r+n}\alpha_r
	,
\end{align}
where $(\alpha,\beta)$ is a Bailey pair relative to $(a,q)$.
First we apply (\ref{EqGarvan41}) to $S_A(z,q)$ to find that
\begin{align*}
	(1+z)\aqprod{z,z^{-1}}{q}{\infty}S_{A}(z,q)
	&=
		P(q)\sum_{n=1}^\infty A_n(q) q^n \aqprod{q}{q}{2n}
		\sum_{j=-n}^{n+1}
		\frac{ (-1)^{j+1} (1-q^{2j-1}) z^j q^{\frac{j(j-3)}{2}+1} }
			{\aqprod{q}{q}{n+j}\aqprod{q}{q}{n-j+1}}
.
\end{align*}
We find that the coefficients of $z^j$ and $z^{1-j}$ in 
$(1+z)\aqprod{z,z^{-1}}{q}{\infty}S_{A}(z,q)$ are equal, due to
$S_A(z,q)$ being symmetric in $z$ and $z^{-1}$.
For $j\ge 2$ we find the coefficient of $z^j$ in 
$(1+z)\aqprod{z,z^{-1}}{q}{\infty}S_A(z,q)$ is given by
\begin{align*}
	&P(q)(-1)^{j+1}(1-q^{2j-1})q^{\frac{j(j-3)}{2}+1}
	\sum_{n=j-1}^\infty
	\frac{A_n(q) q^n \aqprod{q}{q}{2n}}{\aqprod{q}{q}{n+j}\aqprod{q}{q}{n-j+1}}
	\\
	&=
		P(q)(-1)^{j+1}(1-q^{2j-1})q^{\frac{j(j-3)}{2}+1}	
		\sum_{n=0}^\infty
		\frac{A_{n+j-1}(q) q^{n+j-1}\aqprod{q}{q}{2n+2j-2}}
			{\aqprod{q}{q}{n+2j-1}\aqprod{q}{q}{n}}
	\\
	&=
		P(q)(-1)^{j+1} q^{\frac{j(j-1)}{2}}
		\sum_{n=0}^\infty
		\frac{A_{n+j-1}(q) q^n\aqprod{q^{2j-1}}{q}{2n}}
			{\aqprod{q^{2j},q}{q}{n}}
	\\
	&=
		P(q)(-1)^{j+1} q^{\frac{j(j-1)}{2}}
		\sum_{n=0}^\infty
		A_{n+j-1}(q) q^n \aqprod{q^{2j-1}}{q}{2n} \beta^*_n(q^{2j-1};q)
	\\
	&=
		P(q)(-1)^{j+1} q^{\frac{j(j-1)}{2}}
		\sum_{n=0}^\infty
		A_{n+j-1}(q) q^n \aqprod{q^{2j-1}}{q}{2n} \beta^{**}_n(q^{2j-2};q)
.
\end{align*}
The coefficient of $z$ is derived in the same way, but because $n$ will start
at $1$, rather than $0$, we find the coefficient of $z$ is given by 
\begin{align*}
	P(q)
	\sum_{n=0}^\infty
		A_{n}(q) q^n \aqprod{q}{q}{2n} \beta^*_n(q;q)
		-
		A_0(q) P(q)
	&=
	P(q)
	\sum_{n=0}^\infty
		A_{n}(q) q^n \aqprod{q}{q}{2n} \beta^{**}_n(1;q)
		-
		A_0(q)P(q)
.
\end{align*}

If we are to next apply (\ref{EqLovejoy19Again}) with $(\alpha^*,\beta^*)$, 
then we must choose $A$ so that
\begin{align*}
	A_{n+j-1}(q) q^n \aqprod{q^{2j-1}}{q}{2n} 
	&=	
		C_j(q)\aqprod{q^{2j}}{q^2}{n}q^n	
,
\end{align*}
where $C$ is a function dependent on $j$, but not dependent on $n$.
Since
\begin{align*}
	\frac{\aqprod{q^{2j}}{q^2}{n}}
		{\aqprod{q^{2j-1}}{q}{2n}	}
	&=
	\frac{\aqprod{q^2}{q^2}{n+j-1} }
		{\aqprod{q}{q}{2(n+j-1)}	}
	\frac{\aqprod{q}{q}{2j-2}}{\aqprod{q^2}{q^2}{j-1}}	
	,
\end{align*}
we see a reasonable choice for $A$ is
\begin{align*}
	A_n(q) 
	&= 
		\frac{\aqprod{q^2}{q^2}{n}}{\aqprod{q}{q}{2n}}
	=
		\frac{1}{\aqprod{q}{q^2}{n}}
.
\end{align*}
If instead we want apply (\ref{EqLovejoy19Again}) with $(\alpha^{**},\beta^{**})$, 
then we must choose $A$ so that
\begin{align*}
	A_{n+j-1}(q) q^n \aqprod{q^{2j-1}}{q}{2n} 
	&=	
		C_j(q)\aqprod{q^{2j-1}}{q^2}{n}	q^n
.
\end{align*}
Here we find a reasonable choice for $A$ is 
\begin{align*}
	A_n(q) &= \frac{\aqprod{q}{q^2}{n}}{\aqprod{q}{q}{2n}}
	=
	\frac{1}{\aqprod{q^2}{q^2}{n}}
.
\end{align*}

Which of these two choices should we use? Both are valid and lead to different
functions. Using $(\alpha^*,\beta^*)$ we would likely consider the function
\begin{align*}
	\frac{\aqprod{q}{q^2}{\infty}}{\aqprod{z,z^{-1}}{q}{\infty}}
	\sum_{n=1}^\infty \frac{\aqprod{z,z^{-1}}{q}{n}q^n}{\aqprod{q}{q^2}{n}}
\end{align*}
and using $(\alpha^{**},\beta^{**})$ we would use
\begin{align*}
	\frac{\aqprod{q^2}{q^2}{\infty}}{\aqprod{z,z^{-1}}{q}{\infty}}
	\sum_{n=1}^\infty \frac{\aqprod{z,z^{-1}}{q}{n}q^n}{\aqprod{q^2}{q^2}{n}}
.
\end{align*}
However, after elementary rearrangements we recognize the latter as the 
overpartition spt-crank function of \cite{GarvanJennings} and we have nothing
new. With the former, which we earlier called $S_{L9}(z,q)$,
we reduce the products, set $z=1$, and see if we observe
any congruences on a computer. Given the empirical evidence of the congruence
$\spt{L9}{3n+2}$, we then check if the coefficients of $q^{3n+2}$ of 
$S_{L9}(\zeta_3,q)$ appear to be zero. These coefficients do appear to be zero and
so we have a new candidate spt function and spt-crank function to work with.
We now apply (\ref{EqLovejoy19Again}) with $(\alpha^*,\beta^*)$, sum the powers
of $z$, and after a bit of work we arrive at the identity for $S_{L9}(z,q)$ in 
Theorem \ref{TheoremMain}.

We repeat this process with the other conjugate Bailey pair identities of 
\cite{Lovejoy}, as well as with the classical form of Bailey's Lemma 
(\ref{EqBaileysLemma}). After
determining choices of $A_n(q)$ for $(\alpha^*,\beta^*)$ and 
$(\alpha^{**},\beta^{**})$, we see what functions they reduce to when $z=1$,
and check for congruences. Doing so gives many functions previously studied and
many functions without apparent congruences. The functions without apparent 
congruences will still satisfy identities like those in Theorem 
\ref{TheoremMain}, however there are far too many to go through unless we have
a vested interest.

\section{Proof of Theorem \ref{TheoremMain} and \ref{TheoremHeckeSeries}}

\begin{proof}[Proof of (\ref{EqTheoremSL7})]
We use the rearrangements described in the previous section with
(\ref{BaileyL7}) applied to $\beta^{*}(q^{2j-2},q^2)$.
We find the coefficient of $z^j$, for $j\ge 2$, of 
$(1+z)\aqprod{z,z^{-1}}{q^2}{\infty}S_{L7}(z;q)$
is given by
\begin{align*}
	\aqprod{-q}{q}{\infty}
	\sum_{n=j-1}^\infty
	\frac{ (-1)^{j+1}(1-q^{4j-2})q^{2n+j(j-3)+2}\aqprod{q^2}{q^2}{2n}}
		{\aqprod{-q}{q}{2n}\aqprod{q^2}{q^2}{n+j}\aqprod{q^2}{q^2}{n-j+1}}	
	&=
	\frac{(-1)^{j+1}(1+q^{2j-1})q^{j(j-1)}}
	{\aqprod{q^2}{q^2}{\infty}}
	\sum_{n=0}^\infty	q^{\frac{n(n-1)}{2} +2jn }
	.
\end{align*}
When $j=1$ we must subtract $\aqprod{-q}{q}{\infty}$ from this.
By summing the powers of $z$ we then find that
\begin{align*}
	&(1+z)\aqprod{z,z^{-1}}{q^2}{\infty}S_{L7}(z;q)
	\\
	&=
	-(1+z)\aqprod{-q}{q}{\infty}
	+
	\frac{1}{\aqprod{q^2}{q^2}{\infty}}
	\sum_{j=1}^\infty
	(z^j+z^{1-j}) (-1)^{j+1}(1+q^{2j-1})q^{j(j-1)}
	\sum_{n=0}^\infty q^{\frac{n(n-1)}{2} +2jn }	
.
\end{align*}
However, we note the left hand side is zero when $z=1$, and so
\begin{align*}
	\aqprod{-q}{q}{\infty}
	&=	
	\frac{1}{\aqprod{q^2}{q^2}{\infty}}
	\sum_{j=1}^\infty
	(-1)^{j+1}(1+q^{2j-1})q^{j(j-1)}
	\sum_{n=0}^\infty q^{\frac{n(n-1)}{2} +2jn }	
	.
\end{align*}
Noting $z^j+z^{1-j}-1-z=(1-z^j)(1-z^{j-1})z^{1-j}$, we then have
\begin{align*}
	&(1+z)\aqprod{z,z^{-1}}{q^2}{\infty}S_{L7}(z;q)
	\\
	&=
	\frac{1}{\aqprod{q^2}{q^2}{\infty}}
	\sum_{j=1}^\infty
	(1-z^j)(1-z^{j-1})z^{1-j} (-1)^{j+1}(1+q^{2j-1})q^{j(j-1)}
	\sum_{n=0}^\infty q^{\frac{n(n-1)}{2} +2jn }	
	,
\end{align*}
which immediately implies (\ref{EqTheoremSL7}).
\end{proof}
\begin{proof}[Proof of (\ref{EqTheoremSL9})]
We use the rearrangements described in the previous section with
(\ref{BaileyL9}) applied to $\beta^{*}(q^{2j-1},q)$.
We find the coefficient of $z^j$, for $j\ge 2$, of
$(1+z)\aqprod{z,z^{-1}}{q}{\infty}S_{L9}(z;q)$
is given by
\begin{align*}
	\aqprod{q}{q^2}{\infty}
	\sum_{n=j-1}^\infty
	\frac{ (-1)^{j+1}(1-q^{2j-1})q^{n+j(j-3)/2+1}\aqprod{q}{q}{2n}}
		{\aqprod{q}{q^2}{n}\aqprod{q}{q}{n+j}\aqprod{q}{q}{n-j+1}}	
	&=
	\frac{(-1)^{j+1}(1-q^{2j-1})q^{j(j-1)/2}}
	{\aqprod{q}{q}{\infty}}
	\sum_{n=0}^\infty
		(-1)^n	q^{n^2+2jn }
	.
\end{align*}
When $j=1$ we must subtract $\aqprod{q}{q^2}{\infty}$ from this.
By summing the powers of $z$ we then find that
\begin{align*}
	&(1+z)\aqprod{z,z^{-1}}{q}{\infty}S_{L9}(z;q)
	\\
	&=
	-(1+z)\aqprod{q}{q^2}{\infty}
	+
	\frac{1}{\aqprod{q}{q}{\infty}}
	\sum_{j=1}^\infty
	(z^j+z^{1-j})(-1)^{j+1}(1-q^{2j-1})q^{j(j-1)/2}
	\sum_{n=0}^\infty
		(-1)^n	q^{n^2+2jn }
.
\end{align*}
However, we note the left hand side is zero when $z=1$, and so
\begin{align*}
	\aqprod{q}{q^2}{\infty}
	&=	
	\frac{1}{\aqprod{q}{q}{\infty}}
	\sum_{j=1}^\infty
	(-1)^{j+1}(1-q^{2j-1})q^{j(j-1)/2}
	\sum_{n=0}^\infty
		(-1)^n	q^{n^2+2jn }
	.
\end{align*}
We then have
\begin{align*}
	&(1+z)\aqprod{z,z^{-1}}{q}{\infty}S_{L9}(z;q)
	\\
	&=
	\frac{1}{\aqprod{q}{q}{\infty}}
	\sum_{j=1}^\infty
	(1-z^j)(1-z^{j-1})z^{1-j}
	(-1)^{j+1}(1-q^{2j-1})q^{j(j-1)/2}
	\sum_{n=0}^\infty
		(-1)^n	q^{n^2+2jn }	
	,
\end{align*}
which immediately implies (\ref{EqTheoremSL9}).
\end{proof}
\begin{proof}[Proof of (\ref{EqTheoremSL12})]
We use the rearrangements described in the previous section with
(\ref{BaileyL12}) applied to $\beta^{*}(q^{4j-2},q^2)$.
We find the coefficient of $z^j$, for $j\ge 2$, of
$(1+z)\aqprod{z,z^{-1}}{q}{\infty}S_{L12}(z;q)$
is given by
\begin{align*}
	\aqprod{q}{q^2}{\infty}^2
	\sum_{n=j-1}^\infty
	\frac{ (-1)^{j+1}(1-q^{4j-2})q^{2n+j(j-3)+2}\aqprod{q^2}{q^2}{2n}}
		{\aqprod{q}{q^2}{n}\aqprod{q}{q^2}{n+1}\aqprod{q^2}{q^2}{n+j}\aqprod{q^2}{q^2}{n-j+1}}	
	&=
	\frac{(-1)^{j+1}(1-q^{2j-1})q^{j(j-1)}}
	{\aqprod{q^2}{q^2}{\infty}}
	\sum_{n=0}^\infty
		(-1)^n q^{n^2+2jn }  
	.
\end{align*}
When $j=1$ we must subtract $\aqprod{q,q^3}{q^2}{\infty}$ from this.
By summing the powers of $z$ we then find that
\begin{align*}
	&(1+z)\aqprod{z,z^{-1}}{q}{\infty}S_{L12}(z;q)
	\\
	&=
	-(1+z)\aqprod{q,q^3}{q^2}{\infty}
	+
	\frac{1}{\aqprod{q^2}{q^2}{\infty}}
	\sum_{j=1}^\infty
	(z^j+z^{1-j})(-1)^{j+1}(1-q^{2j-1})q^{j(j-1)}
	\sum_{n=0}^\infty
		(-1)^n	q^{n^2+2jn }
.
\end{align*}
However, we note the left hand side is zero when $z=1$, and so
\begin{align*}
	\aqprod{q,q^3}{q^2}{\infty}
	&=	
	\frac{1}{\aqprod{q^2}{q^2}{\infty}}
	\sum_{j=1}^\infty
	(-1)^{j+1}(1-q^{2j-1})q^{j(j-1)}
	\sum_{n=0}^\infty
		(-1)^n	q^{n^2+2jn }
	.
\end{align*}
We then have
\begin{align*}
	&(1+z)\aqprod{z,z^{-1}}{q^2}{\infty}S_{L12}(z;q)
	\\
	&=
	\frac{1}{\aqprod{q^2}{q^2}{\infty}}
	\sum_{j=1}^\infty
	(1-z^j)(1-z^{j-1})z^{1-j}
	(-1)^{j+1}(1-q^{2j-1})q^{j(j-1)}
	\sum_{n=0}^\infty
		(-1)^n	q^{n^2+2jn }	
	,
\end{align*}
which immediately implies (\ref{EqTheoremSL12}).
\end{proof}
\begin{proof}[Proof of (\ref{EqTheoremF})]
We use the rearrangements described in the previous section with
(\ref{Bailey1}) applied to $\beta^{**}(q^{2j-2},q)$.
We find the coefficient of $z^j$, for $j\ge 2$, of
$(1+z)\aqprod{z,z^{-1}}{q}{\infty}F(\rho_1,\rho_2,z;q)$
is given by
\begin{align*}
	&\frac{\aqprod{q}{q}{\infty}}{\aqprod{\rho_1,\rho_2}{q}{\infty}}
	\sum_{n=j-1}^\infty
	\frac{ \aqprod{\rho_1,\rho_2}{q}{n} (\tfrac{q}{\rho_1\rho_2})^n
		(-1)^{j+1}(1-q^{2j-1})q^{\frac{j(j-3)}{2}+1}	
	}{\aqprod{q}{q}{n+j}\aqprod{q}{q}{n-j+1}}
	\\
	&=
	\frac{(-1)^{j+1}q^{j(j-1)/2}\aqprod{\rho_1,\rho_2}{q}{j-1} \aqprod{q^{j+1}/\rho_1,q^{j+1}/\rho_2}{q}\infty	}
	{\rho_1^{j-1}\rho_2^{j-1}\aqprod{\rho_1,\rho_2,\tfrac{q}{\rho_1\rho_2}}{q}{\infty}}	
	(1-q^j/\rho_1-q^j/\rho_2+q^{3j-1}/\rho_1+q^{3j-1}/\rho_2-q^{4j-2})
	.
\end{align*}
When $j=1$ we must subtract $\aqprod{q}{q}{\infty}/\aqprod{\rho_1,\rho_2}{q}{\infty}$ from this.
By summing the powers of $z$ we then find that
\begin{align*}
	&(1+z)\aqprod{z,z^{-1}}{q}{\infty}F(\rho_1,\rho_2,z;q)
	\\
	&=
	-(1+z)\frac{\aqprod{q}{q}{\infty}}{\aqprod{\rho_1,\rho_2}{q}{\infty}}
	\\&\quad
	+
	\sum_{j=1}^\infty
		(z^j+z^{1-j})
		\frac{(-1)^{j+1}q^{j(j-1)/2}\aqprod{\rho_1,\rho_2}{q}{j-1} 
			\aqprod{ \tfrac{q^{j+1}}{\rho_1}, \tfrac{q^{j+1}}{\rho_2} }{q}\infty	}
		{\rho_1^{j-1}\rho_2^{j-1}\aqprod{\rho_1,\rho_2,\tfrac{q}{\rho_1\rho_2}}{q}{\infty}}	
	\left(1 - \tfrac{q^j}{\rho_1} - \tfrac{q^j}{\rho_2} + \tfrac{q^{3j-1}}{\rho_1} + \tfrac{q^{3j-1}}{\rho_2}-q^{4j-2}\right)
.
\end{align*}
However, we note the left hand side is zero when $z=1$, and so
\begin{align*}
	\frac{\aqprod{q}{q}{\infty}}{\aqprod{\rho_1,\rho_2}{q}{\infty}}
	&=	
	\sum_{j=1}^\infty
		\frac{(-1)^{j+1}q^{j(j-1)/2}\aqprod{\rho_1,\rho_2}{q}{j-1} 
			\aqprod{ \tfrac{q^{j+1}}{\rho_1}, \tfrac{q^{j+1}}{\rho_2} }{q}\infty	}
		{\rho_1^{j-1}\rho_2^{j-1}\aqprod{\rho_1,\rho_2,\tfrac{q}{\rho_1\rho_2}}{q}{\infty}}	
	\left(1 - \tfrac{q^j}{\rho_1} - \tfrac{q^j}{\rho_2} + \tfrac{q^{3j-1}}{\rho_1} + \tfrac{q^{3j-1}}{\rho_2}-q^{4j-2}\right)
	.
\end{align*}
We then have
\begin{align*}
	&(1+z)\aqprod{z,z^{-1}}{q}{\infty}F(\rho_1,\rho_2,z;q)
	\\
	&=
	\sum_{j=1}^\infty
		\frac{(1-z^j)(1-z^{j-1})z^{1-j} (-1)^{j+1}q^{j(j-1)/2}\aqprod{\rho_1,\rho_2}{q}{j-1} 
			\aqprod{ \tfrac{q^{j+1}}{\rho_1}, \tfrac{q^{j+1}}{\rho_2} }{q}\infty	}
		{\rho_1^{j-1}\rho_2^{j-1}\aqprod{\rho_1,\rho_2,\tfrac{q}{\rho_1\rho_2}}{q}{\infty}}	
	\\&\quad\times
	\left(1 - \tfrac{q^j}{\rho_1} - \tfrac{q^j}{\rho_2} + \tfrac{q^{3j-1}}{\rho_1} + \tfrac{q^{3j-1}}{\rho_2}-q^{4j-2}\right)
	,
\end{align*}
which immediately implies (\ref{EqTheoremF}).
\end{proof}
\begin{proof}[Proof of (\ref{EqTheoremG})]
We use the rearrangements described in the previous section with
(\ref{Bailey2}) applied to $\beta^{*}(q^{2j-1},q)$.
We find the coefficient of $z^j$, for $j\ge 2$, of
$(1+z)\aqprod{z,z^{-1}}{q}{\infty}G(\rho_1,\rho_2,z;q)$
is given by
\begin{align*}
	&\frac{\aqprod{q}{q}{\infty}}{\aqprod{\rho_1,\rho_2}{q}{\infty}}
	\sum_{n=j-1}^\infty
	\frac{ \aqprod{\rho_1,\rho_2}{q}{n} (\tfrac{q^2}{\rho_1\rho_2})^n
		(-1)^{j+1}(1-q^{2j-1})q^{\frac{j(j-3)}{2}+1}	
	}{\aqprod{q}{q}{n+j}\aqprod{q}{q}{n-j+1}}
	\\
	&=
	\frac{(-1)^{j+1}(1-q^{2j-1})q^{j(j+1)/2-1}\aqprod{\rho_1,\rho_2}{q}{j-1} \aqprod{q^{j+1}/\rho_1,q^{j+1}/\rho_2}{q}\infty	}
	{\rho_1^{j-1}\rho_2^{j-1}\aqprod{\rho_1,\rho_2,\tfrac{q^2}{\rho_1\rho_2}}{q}{\infty}}	
	.
\end{align*}
When $j=1$ we must subtract $\aqprod{q}{q}{\infty}/\aqprod{\rho_1,\rho_2}{q}{\infty}$ from this.
By summing the powers of $z$ we then find that
\begin{align*}
	&(1+z)\aqprod{z,z^{-1}}{q}{\infty}G(\rho_1,\rho_2,z;q)
	\\
	&=
	-(1+z)\frac{\aqprod{q}{q}{\infty}}{\aqprod{\rho_1,\rho_2}{q}{\infty}}
	+
	\sum_{j=1}^\infty
	(z^j+z^{1-j})
	\frac{(-1)^{j+1}(1-q^{2j-1})q^{j(j+1)/2-1}\aqprod{\rho_1,\rho_2}{q}{j-1} 
		\aqprod{q^{j+1}/\rho_1,q^{j+1}/\rho_2}{q}\infty	}
	{\rho_1^{j-1}\rho_2^{j-1}\aqprod{\rho_1,\rho_2,\tfrac{q^2}{\rho_1\rho_2}}{q}{\infty}}	
.
\end{align*}
However, we note the left hand side is zero when $z=1$, and so
\begin{align*}
	\frac{\aqprod{q}{q}{\infty}}{\aqprod{\rho_1,\rho_2}{q}{\infty}}
	&=	
	\sum_{j=1}^\infty
	\frac{(-1)^{j+1}(1-q^{2j-1})q^{j(j+1)/2-1}\aqprod{\rho_1,\rho_2}{q}{j-1} 
		\aqprod{q^{j+1}/\rho_1,q^{j+1}/\rho_2}{q}\infty	}
	{\rho_1^{j-1}\rho_2^{j-1}\aqprod{\rho_1,\rho_2,\tfrac{q^2}{\rho_1\rho_2}}{q}{\infty}}	
	.
\end{align*}
We note this is the same product as in the previous proof, but a rather different 
series. Regardless, we then have
\begin{align*}
	&(1+z)\aqprod{z,z^{-1}}{q}{\infty}G(\rho_1,\rho_2,z;q)
	\\
	&=
	\sum_{j=1}^\infty
	\frac{(1-z^j)(1-z^{j-1})z^{1-j} (-1)^{j+1}(1-q^{2j-1})q^{j(j+1)/2-1}\aqprod{\rho_1,\rho_2}{q}{j-1} 
		\aqprod{q^{j+1}/\rho_1,q^{j+1}/\rho_2}{q}\infty	}
	{\rho_1^{j-1}\rho_2^{j-1}\aqprod{\rho_1,\rho_2,\tfrac{q^2}{\rho_1\rho_2}}{q}{\infty}}	
	,
\end{align*}
which immediately implies (\ref{EqTheoremG}).
\end{proof}
\begin{proof}[Proof of (\ref{EqTheoremS1}).]
With $q\mapsto q^2$, $\rho_1=q$, and $\rho_2=q^2$ in (\ref{EqTheoremG}), we 
have that
\begin{align*}
	&S_{G1}(z,q)
	\\
	&=
	\frac{1}{(1+z)\aqprod{z,z^{-1}}{q^2}{\infty}}
	\sum_{j=1}^\infty
	\frac{(1-z^j)(1-z^{j-1})z^{1-j}(-1)^{j+1}(1-q^{4j-2})q^{j(j+1)-2}\aqprod{q,q^2}{q}{j-1}
		\aqprod{q^{2j+1},q^{2j}}{q^2}{\infty} }
	{q^{3j-3}\aqprod{q,q^2,q}{q^2}{\infty}}
	\\
	&=
	\frac{1}{(1+z)\aqprod{z,z^{-1},q}{q^2}{\infty}}
	\sum_{j=1}^\infty
	(1-z^j)(1-z^{j-1})z^{1-j}(-1)^{j+1}(1+q^{2j-1})q^{(j-1)^2}	
,
\end{align*}
which is (\ref{EqTheoremS1}).
\end{proof}
\begin{proof}[Proof of (\ref{EqTheoremS2}).]
With $\rho_1=iq^{1/2}$, and $\rho_2=-iq^{1/2}$ in (\ref{EqTheoremG}), we 
have that
\begin{align*}
	&S_{G2}(z,q)
	\\
	&=
	\frac{1}{(1+z)\aqprod{z,z^{-1}}{q}{\infty}}
	\sum_{j=1}^\infty
		(1-z^j)(1-z^{j-1})z^{1-j}(-1)^{j+1}(1-q^{2j-1})q^{\frac{j(j+1)}{2}-1}
		\aqprod{iq^{1/2},-iq^{1/2}}{q}{j-1}
		\\&\quad\times
		\frac{ \aqprod{-iq^{j+1/2},iq^{j+1/2}}{q}{\infty} }
		{q^{j-1}\aqprod{iq^{1/2},-iq^{1/2},q}{q}{\infty}}
	\\
	&=
	\frac{1}{(1+z)\aqprod{z,z^{-1}}{q}{\infty}}
	\sum_{j=1}^\infty
	\frac{(1-z^j)(1-z^{j-1})z^{1-j}(-1)^{j+1}(1-q^{2j-1})q^{\frac{j(j-1)}{2}}
		\aqprod{-q}{q^2}{j-1}
		\aqprod{-q^{2j+1}}{q^2}{\infty} }
	{\aqprod{-q}{q^2}{\infty}\aqprod{q}{q}{\infty}}
	\\
	&=
	\frac{1}{(1+z)\aqprod{z,z^{-1},q}{q}{\infty}}
	\sum_{j=1}^\infty
	\frac{(1-z^j)(1-z^{j-1})z^{1-j}(-1)^{j+1}(1-q^{2j-1})q^{\frac{j(j-1)}{2}}}
	{(1+q^{2j-1})}
	\\
	&=
	\frac{1}{(1+z)\aqprod{z,z^{-1},q}{q}{\infty}}
	\sum_{j=-\infty}^\infty
	\frac{(1-z^j)(1-z^{j-1})z^{1-j}(-1)^{j+1}q^{\frac{j(j-1)}{2}}}
	{(1+q^{2j-1})}
,
\end{align*}
which is (\ref{EqTheoremS2}).
\end{proof}
\begin{proof}[Proof of (\ref{EqTheoremS3}).]
With $\rho=-q$ in (\ref{EqTheoremFInfinity}), we 
have that
\begin{align*}
	S_{F1}(z,q)
	&=
	\frac{1}{(1+z)\aqprod{z,z^{-1}}{q}{\infty}}
	\sum_{j=1}^\infty
	\frac{(1-z^j)(1-z^{j-1})z^{1-j}(-1)^{j+1}q^{(j-1)^2}\aqprod{-q}{q}{j-1}
		\aqprod{-q^j}{q}{\infty} }
	{q^{j-1}\aqprod{-q}{q}{\infty}}
		\\&\quad\times
		(1+q^{j-1}-q^{3j-2}-q^{4j-2})
	\\
	&=
	\frac{1}{(1+z)\aqprod{z,z^{-1}}{q}{\infty}}
	\sum_{j=1}^\infty
	(1-z^j)(1-z^{j-1})z^{1-j}(-1)^{j+1}(1+q^{j-1}-q^{3j-2}-q^{4j-2})q^{j^2-3j+2}	
	\\
	&=
	\frac{1}{(1+z)\aqprod{z,z^{-1}}{q}{\infty}}
	\sum_{j=-\infty}^\infty
	(1-z^j)(1-z^{j-1})z^{1-j}(-1)^{j+1}(1+q^{j-1})q^{j^2-3j+2}	
,
\end{align*}
which is (\ref{EqTheoremS3}). Next we will use the Jacobi triple product identity
\cite[Theorem 2.8]{AndrewsBook},
\begin{align*}
	\sum_{j=-\infty}^\infty 
	(-1)^j t^j q^{j^2}
	&=
	\aqprod{tq,t^{-1}q,q^2}{q^2}{\infty}
.
\end{align*}
We have that
\begin{align*}
	&(1+z)\aqprod{z,z^{-1}}{q}{\infty}S_{F1}(z,q)
	\\	
	&=
		-q^2\sum_{j=-\infty}^\infty (-1)^j z^j q^{j^2-3j}	
		-q\sum_{j=-\infty}^\infty (-1)^j z^j q^{j^2-2j}	
		-zq^2\sum_{j=-\infty}^\infty (-1)^j z^{-j} q^{j^2-3j}
		-zq\sum_{j=-\infty}^\infty (-1)^j z^{-j} q^{j^2-2j}
		\\&\quad
		+(1+z)q^2\sum_{j=-\infty}^\infty (-1)^j q^{j^2-3j}
		+(1+z)\sum_{j=-\infty}^\infty (-1)^j q^{j^2-2j}
	\\
	&=
		-q^2\aqprod{zq^{-2},z^{-1}q^4,q^2}{q^2}{\infty}
		-q\aqprod{zq^{-1},z^{-1}q^3,q^2}{q^2}{\infty}
		-zq^2\aqprod{z^{-1}q^{-2},zq^4,q^2}{q^2}{\infty}
		\\&\quad
		-zq\aqprod{z^{-1}q^{-1},zq^3,q^2}{q^2}{\infty}
		+(1+z)q^2\aqprod{q^{-2},q^4,q^2}{q^2}{\infty}
		+(1+z)q\aqprod{q^{-1},q^3,q^2}{q^2}{\infty}
	\\
	&=
		(1-z^2)\aqprod{z^{-1},zq^2,q^2}{q^2}{\infty}
		+(1+z)q\aqprod{z^{-1}q,zq,q^2}{q^2}{\infty}
		-(1+z)\aqprod{q,q,q^2}{q^2}{\infty}
,
\end{align*}
where the last equality follows from multiple applications of
$\aqprod{t,qt^{-1}}{q}{\infty}=-t\aqprod{tq,t^{-1}}{q}{\infty}$.
We see that (\ref{EqTheoremS3Product}) follows from the above after
diving by $(1+z)\aqprod{z,z^{-1}}{q}{\infty}$ and elementary simplifications.
\end{proof}

\begin{proof}[Proof of (\ref{EqTheoremS4}).]
With $\rho=q$ in (\ref{EqTheoremGInfinity}), we 
have that
\begin{align*}
	S_{G3}(z,q)
	&=
	\frac{1}{(1+z)\aqprod{z,z^{-1}}{q}{\infty}}
	\sum_{j=1}^\infty
	\frac{(1-z^j)(1-z^{j-1})z^{1-j}(1-q^{2j-1})q^{j(j-1)}\aqprod{q}{q}{j-1}
		\aqprod{q^j}{q}{\infty} }
	{q^{j-1}\aqprod{q}{q}{\infty}}
	\\
	&=
	\frac{1}{(1+z)\aqprod{z,z^{-1}}{q}{\infty}}
	\sum_{j=1}^\infty
	(1-z^j)(1-z^{j-1})z^{1-j}(1-q^{2j-1})q^{(j-1)^2}	
,
\end{align*}
which is (\ref{EqTheoremS4}).
\end{proof}

\begin{proof}[Proof of Theorem \ref{TheoremHeckeSeries}.]

The proofs of (\ref{EqTheoremSL7Hecke}), (\ref{EqTheoremSL9Hecke}), and
(\ref{EqTheoremSL12Hecke}) are all a rearrangement of the series in
Theorem \ref{TheoremMain}. We describe the rearrangements here and then proceed
with the calculations. First we reverse the order of summation and expand the 
double series into a sum of two
double series. Second we replace $n$ by $-1-n$ in the second double series.
Third we rewrite the summands in both double series in a common form and
obtain a double series that is bilateral in $n$. Fourth we replace
$j$ by $j-|n|+1$, $j-2|n|+1$, and $j-|n|+1$ for
(\ref{EqTheoremSL7Hecke}), (\ref{EqTheoremSL9Hecke}), and
(\ref{EqTheoremSL12Hecke}) respectively. Lastly we exchange
the order of summation to obtain the identity. Since these calculations are so
similar, we only write out the details for $S_{L7}(z,q)$.

By (\ref{EqTheoremSL7}) we have that
\begin{align*}
	&(1+z)\aqprod{q^2,z,z^{-1}}{q^2}{\infty}S_{L7}(z,q)
	\\
	&=
	\sum_{n=0}^\infty\sum_{j=1}^\infty
		(1-z^j)(1-z^{j-1})z^{1-j} (-1)^{j+1}q^{j(j-1) +\frac{n(n-1)}{2}+2jn}
		\\&\quad	
		+
		\sum_{n=0}^\infty\sum_{j=1}^\infty
			(1-z^j)(1-z^{j-1})z^{1-j} (-1)^{j+1}q^{j(j+1)-1 +\frac{n(n-1)}{2}+2jn}
	\\	
	&=
	\sum_{n=0}^\infty\sum_{j=1}^\infty
		(1-z^j)(1-z^{j-1})z^{1-j} (-1)^{j+1}q^{j(j-1) +\frac{n(n-1)}{2}+2jn}
		\\&\quad
		+
		\sum_{n=-\infty}^{-1}\sum_{j=1}^\infty
		(1-z^j)(1-z^{j-1})z^{1-j} (-1)^{j+1}q^{j(j-1) +\frac{n(n+3)}{2}-2jn}
	\\
	&=
	\sum_{n=-\infty}^{\infty}\sum_{j=1}^\infty
		(1-z^j)(1-z^{j-1})z^{1-j} (-1)^{j+1}q^{j(j-1) +\frac{n(n+1)}{2}-|n|+2j|n|}
	\\
	&=
	\sum_{n=-\infty}^\infty\sum_{j=|n|}^\infty
		(1-z^{j-|n|+1})(1-z^{j-|n|})z^{|n|-j} (-1)^{j+n}q^{j(j+1) -\frac{n(n-1)}{2}}
	\\
	&=
	\sum_{j=0}^\infty\sum_{n=-j}^j
		(1-z^{j-|n|+1})(1-z^{j-|n|})z^{|n|-j} (-1)^{j+n}q^{j(j+1) -\frac{n(n-1)}{2}}
	,
\end{align*}
which implies (\ref{EqTheoremSL7Hecke}).

\end{proof}

\section{Proof of Theorem \ref{TheoremCranks}}

To prove Theorem \ref{TheoremCranks}, we need to show
the coefficients of the following terms are zero:
$q^{3m+1}$ in $S_{L7}(\zeta_3,q)$,
$q^{3m+2}$ in $S_{L9}(\zeta_3,q)$,
$q^{3m}$ in $S_{L12}(\zeta_3,q)$,
$q^{3m}$ in $S_{G1}(\zeta_3,q)$,
$q^{3m}$ in $S_{G2}(\zeta_3,q)$,
$q^{3m+2}$ in $S_{G2}(\zeta_3,q)$,
$q^{10m+9}$ in $S_{F1}(\zeta_5,q)$, and
$q^{5m+3}$ in $S_{G3}(\zeta_5,q)$.

We first note that 
$\aqprod{q,\zeta_3q,\zeta_3^{-1}q}{q}{\infty}=\aqprod{q^3}{q^3}{\infty}$.
By (\ref{EqTheoremSL7}) we have that
\begin{align*}
	S_{L7}(z;q)
	&=
		\frac{-3\zeta_3}{\aqprod{q^6}{q^6}{\infty}}
		\sum_{j=1}^\infty
		\sum_{n=0}^\infty
		(1-\zeta_3^j)(1-\zeta_3^{j-1})\zeta_3^{1-j} (-1)^{j+1}(1+q^{2j-1})q^{j(j-1) +\frac{n(n-1)}{2}+2jn}
.
\end{align*}
We note that the terms in the series are zero except when $j\equiv 2\pmod{3}$. 
However when $j\equiv 2\pmod{3}$, one finds that
$(1+q^{2j-1})q^{j(j-1) +\frac{n(n-1)}{2}+2jn}$
contributes only terms of the form $q^{3m}$ and $q^{3m+2}$. Thus 
$S_{L7}(\zeta_3,q)$ has no non-zero terms of the form $q^{3m+1}$.
Using (\ref{EqTheoremSL9}), (\ref{EqTheoremSL12}), and (\ref{EqTheoremS2})
we find the same reasoning shows the coefficients of
$q^{3m+2}$ in $S_{L9}(\zeta_3,q)$,
$q^{3m}$ in $S_{L12}(\zeta_3,q)$,
$q^{3m}$ in $S_{G2}(\zeta_3,q)$, and
$q^{3m+2}$ in $S_{G2}(\zeta_3,q)$
are all zero.

Next by (\ref{EqTheoremS1}) we have that
\begin{align}
	\label{EqS1CrankStuff}
	S_{G1}(z,q)
	&=
		\frac{-3\zeta_3 \aqprod{q^2}{q^2}{\infty}}{\aqprod{q^6}{q^6}{\infty}\aqprod{q}{q^2}{\infty}}
		\sum_{j=-\infty}^\infty 
		(1-z^j)(1-z^{j-1})z^{1-j}(-1)^{j+1}(1+q^{2j-1})q^{(j-1)^2}
.
\end{align}
By Gauss \cite[Corollary 2.10]{AndrewsBook} we have
\begin{align*}
	\frac{\aqprod{q^2}{q^2}{\infty}}{\aqprod{q}{q^2}{\infty}}
	&=
	\sum_{n=0}^\infty q^{\frac{n(n+1)}{2}}
,
\end{align*}
and so $\frac{\aqprod{q^2}{q^2}{\infty}}{\aqprod{q}{q^2}{\infty}}$ has only terms
of the form $q^{3m}$ and $q^{3m+1}$. In (\ref{EqS1CrankStuff}), the terms in the 
series are zero except when $j\equiv 2\pmod{3}$. 
However when $j\equiv 2\pmod{3}$, one finds that
$(1+q^{2j-1})q^{(j-1)^2}$
contributes only terms of the form $q^{3m+1}$. 
Thus 
$S_{G1}(\zeta_3,q)$ has no non-zero terms of the form $q^{3m}$.

For $S_{F1}(\zeta_5,q)$ and $S_{G3}(\zeta_5,q)$, we first note that
Lemma 3.9 of \cite{Garvan1} is
\begin{align}
	\label{EqGarvan39}
	\frac{1}{\aqprod{\zeta_5q,\zeta_5^{-1}q}{q}{\infty}}
	&=
	\frac{1}{\aqprod{q^5,q^{20}}{q^{25}}{\infty}}
	+	
	\frac{(\zeta_5+\zeta_5^{-1})q}{\aqprod{q^{10},q^{15}}{q^{25}}{\infty}}
	.
\end{align}
Also with the Jacobi triple product identity one can easily deduce that
\begin{align*}
	\aqprod{\zeta_5q^2,\zeta_5^{-1}q^2,q^2}{q^2}{\infty}
	&=
		\aqprod{q^{20},q^{30},q^{50}}{q^{50}}{\infty}	
		+
		(\zeta_5^2+\zeta_5^3)q^2\aqprod{q^{10},q^{40},q^{50}}{q^{50}}{\infty}
	,\\
	\aqprod{\zeta_5q,\zeta_5^{-1}q,q^2}{q^2}{\infty}
	&=
		\aqprod{q^{25},q^{25},q^{50}}{q^{50}}{\infty}	
		-
		(\zeta_5+\zeta_5^4)q\aqprod{q^{15},q^{35},q^{50}}{q^{50}}{\infty}
		\\&\quad		
		+
		(\zeta_5^2+\zeta_5^3)q^4\aqprod{q^{5},q^{45},q^{50}}{q^{50}}{\infty}
	,\\
	\aqprod{q,q,q^2}{q^2}{\infty}
	&=
		\aqprod{q^{25},q^{25},q^{50}}{q^{50}}{\infty}	
		-
		2q\aqprod{q^{15},q^{35},q^{50}}{q^{50}}{\infty}
		+
		2q^4\aqprod{q^{5},q^{45},q^{50}}{q^{50}}{\infty}
	.
\end{align*}
By (\ref{EqTheoremS3Product}) and the above, we have that
\begin{align*}
	S_{F1}(\zeta_5,q)
	&=		
		\frac{\aqprod{\zeta_5^{-1}q^2,\zeta_5q^2,q^2}{q^2}{\infty}}{\aqprod{\zeta_5q,\zeta_5^{-1}q}{q}{\infty}}		
		+
		\frac{\aqprod{\zeta_5^{-1}q,\zeta_5q,q^2}{q^2}{\infty}}{\aqprod{\zeta_5,\zeta_5^{-1}}{q}{\infty}}	
		-
		\frac{\aqprod{q,q,q^2}{q^2}{\infty}}{\aqprod{\zeta_5,\zeta_5^{-1}}{q}{\infty}}	
	\\
	&=	
		\frac{\aqprod{q^{20},q^{30},q^{50}}{q^{50}}{\infty}}
			{\aqprod{q^5,q^{20}}{q^{25}}{\infty}}
		+
		(\zeta_5^2+\zeta_5^3)q^5   
		\frac{\aqprod{q^{5},q^{45},q^{50}}{q^{50}}{\infty}}
			{\aqprod{q^{10},q^{15}}{q^{25}}{\infty}}	
		+
		(\zeta_5+\zeta_5^{4})q 
		\frac{\aqprod{q^{20},q^{30},q^{50}}{q^{50}}{\infty}}
			{\aqprod{q^{10},q^{15}}{q^{25}}{\infty}}
		\\&\quad
		+
		q\frac{\aqprod{q^{15},q^{35},q^{50}}{q^{50}}{\infty}}
			{\aqprod{q^5,q^{20}}{q^{25}}{\infty}}	
		+
		(\zeta_5^2+\zeta_5^3)q^2 
		\frac{\aqprod{q^{10},q^{40},q^{50}}{q^{50}}{\infty}}
			{\aqprod{q^5,q^{20}}{q^{25}}{\infty}}
		+		
		(\zeta_5+\zeta_5^4)q^2 
		\frac{\aqprod{q^{15},q^{35},q^{50}}{q^{50}}{\infty}}
			{\aqprod{q^{10},q^{15}}{q^{25}}{\infty}}	
		\\&\quad
		-
		q^3\frac{\aqprod{q^{10},q^{40},q^{50}}{q^{50}}{\infty}}
			{\aqprod{q^{10},q^{15}}{q^{25}}{\infty}}
		+
		( \zeta_5^3 + \zeta_5^2 -1 )q^4 
		\frac{\aqprod{q^{5},q^{45},q^{50}}{q^{50}}{\infty}}
			{\aqprod{q^5,q^{20}}{q^{25}}{\infty}}	
.
\end{align*}
However, we see that
\begin{align*}
	\frac{\aqprod{q^{5},q^{45},q^{50}}{q^{50}}{\infty}}
		{\aqprod{q^5,q^{20}}{q^{25}}{\infty}}	
	&=
	\frac{\aqprod{q^{5},q^{45},q^{50}}{q^{50}}{\infty}}
		{\aqprod{q^5,q^{20},q^{30},q^{45}}{q^{50}}{\infty}}	
	=
	\frac{\aqprod{q^{50}}{q^{50}}{\infty}}
		{\aqprod{q^{20},q^{30}}{q^{50}}{\infty}}	
,
\end{align*}
and so while $S_{F1}(\zeta_5,q)$ does have terms of the form $q^{5m+4}$, it 
has no terms of the form $q^{10m+9}$.

By (\ref{EqTheoremS4}) we have that
\begin{align*}
	S_{G3}(z,q)
	&=
		\frac{1}{(1+\zeta_5)(1-\zeta_5)(1-\zeta_5^{-1})\aqprod{\zeta q,\zeta^{-1}q}{q}{\infty}}
		\sum_{j=1}^\infty (1-z^j)(1-z^{j-1})z^{1-j}(1-q^{2j-1})q^{(j-1)^2}
.
\end{align*}
However by (\ref{EqGarvan39}), $\frac{1}{\aqprod{\zeta q,\zeta^{-1}q}{q}{\infty}}$
contributes only terms of the form $q^{5n}$ and $q^{5n+1}$. In the series, the
terms are zero except when $j\equiv 2,3,4 \pmod{5}$. One can verify when
$j\equiv 2,3,4 \pmod{5}$ that
$(1-q^{2j-1})q^{(j-1)^2}$ contributes only terms of the form
$q^{5n+1}$ and $q^{5n+4}$. Thus  
$S_{G3}(\zeta_5,q)$ has no non-zero terms of the form $q^{5m+3}$.

\section{Remarks}

Here we have demonstrated that new spt-crank functions arise from conjugate 
Bailey pair identities and variations of Bailey's Lemma, when applied carefully
to the two generic Bailey pairs $(\alpha^*,\beta^*)$ and 
$(\alpha^{**},\beta^{**})$. Previously we saw that spt-crank functions arise
from applying the classical form of Bailey's Lemma to a series
\begin{align*}
	\frac{P(q)}{\aqprod{z,z^{-1}}{q}{\infty}}
	\sum_{n=1}^\infty \aqprod{z,z^{-1}}{q}{n}q^n\beta_n
,
\end{align*}
where $(\alpha,\beta)$ is a Bailey pair relative to $(1,q)$. There are
spt-crank functions that appear in both forms. This was first noticed
in \cite{GarvanJennings2}. In these cases we started with spt-crank functions
defined by the above series and then applied the techniques of this article.
It does appear one can work in the opposite direction as well.
In particular it would appear that $S_{F1}(z,q)$ also comes from an spt-crank function
in the form
\begin{align*}
	\frac{\aqprod{q}{q}{\infty}}{\aqprod{z,z^{-1},-q}{q}{\infty}}
	\sum_{n=1}^\infty \aqprod{z,z^{-1}}{q}{n}q^n\beta_n
,
\end{align*}
with the Bailey pair relative to $(1,q)$ given by
\begin{align*}
	\beta_n 
	&=
	\frac{q^{\frac{n(n-3)}{2}}\aqprod{-q}{q}{n}}{\aqprod{q}{q}{2n}}
	,
	&\alpha_n
	&=
	\left\{ \begin{array}{ll}
		1	& \mbox{ if } n=0
		,\\	
		(-1)^{\frac{n-1}{2}}q^{\frac{n^2-4n-1}{4}}(1-q^{2n})	& \mbox{ if } n \mbox{ is odd}
		,\\
		(-1)^{\frac{n}{2}}q^{\frac{n^2-2n}{4}}(1+q^{n})	& \mbox{ if } n \mbox{ is even}
	.	
	\end{array}\right.	
\end{align*}
With this we could approach $S_{F1}(z,q)$ by applying Bailey's Lemma with
$\rho_1=z$, $\rho_2=z^{-1}$ and obtaining a difference of a generalized Lambert 
series and an infinite product
we could dissect at roots of unity. However we would first need to verify
that the above is indeed a Bailey pair, which we should be able to easily prove
along the same lines as the Bailey pairs from group C of \cite{Slater1}.

The functions $F$ and $G$ are fairly general, so we would expect other 
specializations are of interest as well. Actually many specializations
are functions that were previously studied. To list just a few,
$G(-z^{1/2},-z^{-1/2},z^{1/2};-q)$ is the M2spt crank function $S2(z,q)$
from \cite{GarvanJennings},
$G(q,q,z;q^2)$ is the $\overline{\mbox{spt2}}$ crank function $S(z,q)$ 
from \cite{JS2},
$G(-q^{1/2},q^{1/2},z;q)$ is $S_{E2}(z,q)$ 
and $G(-q,z;q)$ is $S_{C5}(z,q)$
from \cite{GarvanJennings2}.
Additionally, the function $\spt{G2}{n}$ was independently introduced
as $\sptBar{\omega}{n}$ in \cite{AndrewsDixitSchultzYee1}.

\bibliographystyle{abbrv}
\bibliography{someSmallestPartsFromVariationsOfBaileysLemmaRef}

\begin{thebibliography}{10}

\bibitem{AhlgrenBringmannLovejoy}
S.~Ahlgren, K.~Bringmann, and J.~Lovejoy.
\newblock {$\ell$}-adic properties of smallest parts functions.
\newblock {\em Adv. Math.}, 228(1):629--645, 2011.

\bibitem{Andersen1}
N.~Andersen.
\newblock Vector-valued modular forms and the mock theta conjectures.
\newblock {\em Res. Number Theory}, 2:Art. 32, 14, 2016.

\bibitem{AndrewsBook}
G.~E. Andrews.
\newblock {\em The theory of partitions}.
\newblock Addison-Wesley Publishing Co., Reading, Mass.-London-Amsterdam, 1976.
\newblock Encyclopedia of Mathematics and its Applications, Vol. 2.

\bibitem{Andrews2}
G.~E. Andrews.
\newblock {\em {$q$}-series: their development and application in analysis,
  number theory, combinatorics, physics, and computer algebra}, volume~66 of
  {\em CBMS Regional Conference Series in Mathematics}.
\newblock Published for the Conference Board of the Mathematical Sciences,
  Washington, DC; by the American Mathematical Society, Providence, RI, 1986.

\bibitem{Andrews3}
G.~E. Andrews.
\newblock Bailey's transform, lemma, chains and tree.
\newblock In {\em Special functions 2000: current perspective and future
  directions ({T}empe, {AZ})}, volume~30 of {\em NATO Sci. Ser. II Math. Phys.
  Chem.}, pages 1--22. Kluwer Acad. Publ., Dordrecht, 2001.

\bibitem{Andrews}
G.~E. Andrews.
\newblock The number of smallest parts in the partitions of {$n$}.
\newblock {\em J. Reine Angew. Math.}, 624:133--142, 2008.

\bibitem{AndrewsDixitSchultzYee1}
G.~E. {Andrews}, A.~{Dixit}, D.~{Schultz}, and A.~J. {Yee}.
\newblock {Overpartitions related to the mock theta function $\omega(q)$}.
\newblock {\em ArXiv e-prints}, Mar. 2016.

\bibitem{AndrewsGarvan}
G.~E. Andrews and F.~G. Garvan.
\newblock Dyson's crank of a partition.
\newblock {\em Bull. Amer. Math. Soc. (N.S.)}, 18(2):167--171, 1988.

\bibitem{AGL}
G.~E. Andrews, F.~G. Garvan, and J.~Liang.
\newblock Combinatorial interpretations of congruences for the spt-function.
\newblock {\em Ramanujan J.}, 29(1-3):321--338, 2012.

\bibitem{AS}
A.~O.~L. Atkin and P.~Swinnerton-Dyer.
\newblock Some properties of partitions.
\newblock {\em Proc. London Math. Soc. (3)}, 4:84--106, 1954.

\bibitem{Bailey}
W.~N. Bailey.
\newblock Identities of the {R}ogers-{R}amanujan type.
\newblock {\em Proc. London Math. Soc. (2)}, 50:1--10, 1948.

\bibitem{Berndt2}
B.~C. Berndt.
\newblock Ramanujan's congruences for the partition function modulo 5, 7, and
  11.
\newblock {\em Int. J. Number Theory}, 3(3):349--354, 2007.

\bibitem{Bringmann}
K.~Bringmann.
\newblock On the explicit construction of higher deformations of partition
  statistics.
\newblock {\em Duke Math. J.}, 144(2):195--233, 2008.

\bibitem{BLO1}
K.~Bringmann, J.~Lovejoy, and R.~Osburn.
\newblock Automorphic properties of generating functions for generalized rank
  moments and {D}urfee symbols.
\newblock {\em Int. Math. Res. Not. IMRN}, (2):238--260, 2010.

\bibitem{Dyson}
F.~J. Dyson.
\newblock Some guesses in the theory of partitions.
\newblock {\em Eureka}, (8):10--15, 1944.

\bibitem{Folsom}
A.~Folsom.
\newblock A short proof of the mock theta conjectures using {M}aass forms.
\newblock {\em Proc. Amer. Math. Soc.}, 136(12):4143--4149, 2008.

\bibitem{GarvanJennings}
F.~{Garvan} and C.~{Jennings-Shaffer}.
\newblock {The spt-crank for overpartitions}.
\newblock {\em Acta Arith.}, 166(2):141--188, 2014.

\bibitem{Garvan1}
F.~G. Garvan.
\newblock New combinatorial interpretations of {R}amanujan's partition
  congruences mod {$5,7$} and {$11$}.
\newblock {\em Trans. Amer. Math. Soc.}, 305(1):47--77, 1988.

\bibitem{Garvan3}
F.~G. Garvan.
\newblock Congruences for {A}ndrews' spt-function modulo powers of {$5$}, {$7$}
  and {$13$}.
\newblock {\em Trans. Amer. Math. Soc.}, 364(9):4847--4873, 2012.

\bibitem{Garvan2}
F.~G. Garvan.
\newblock Universal mock theta functions and two-variable {H}ecke-{R}ogers
  identities.
\newblock {\em Ramanujan J.}, 36(1-2):267--296, 2015.

\bibitem{GarvanJennings2}
F.~G. Garvan and C.~Jennings-Shaffer.
\newblock Exotic {B}ailey-{S}later spt-functions {II}: {H}ecke-{R}ogers-type
  double sums and {B}ailey pairs from groups {A}, {C}, {E}.
\newblock {\em Adv. Math.}, 299:605--639, 2016.

\bibitem{JS2}
C.~Jennings-Shaffer.
\newblock Another {SPT} crank for the number of smallest parts in
  overpartitions with even smallest part.
\newblock {\em J. Number Theory}, 148:196--203, 2015.

\bibitem{Lovejoy}
J.~Lovejoy.
\newblock Ramanujan-type partial theta identities and conjugate {B}ailey pairs.
\newblock {\em Ramanujan J.}, 29(1-3):51--67, 2012.

\bibitem{McLaughlinSillsZimmer}
J.~McLaughlin, A.~V. Sills, and P.~Zimmer.
\newblock Rogers-ramanujan-slater type identities.
\newblock {\em Electron. J. Combin.}, 15, 2008.

\bibitem{Ono2}
K.~Ono.
\newblock Congruences for the {A}ndrews spt function.
\newblock {\em Proc. Natl. Acad. Sci. USA}, 108(2):473--476, 2011.

\bibitem{Patkowski}
A.~E. Patkowski.
\newblock A strange partition theorem related to the second {A}tkin-{G}arvan
  moment.
\newblock {\em Int. J. Number Theory}, 11(7):2191--2197, 2015.

\bibitem{Ramanujan}
S.~Ramanujan.
\newblock Congruence properties of partitions.
\newblock {\em Math. Z.}, 9(1-2):147--153, 1921.

\bibitem{SchillingWarnaar}
A.~Schilling and S.~O. Warnaar.
\newblock Conjugate {B}ailey pairs: from configuration sums and
  fractional-level string functions to {B}ailey's lemma.
\newblock In {\em Recent developments in infinite-dimensional {L}ie algebras
  and conformal field theory ({C}harlottesville, {VA}, 2000)}, volume 297 of
  {\em Contemp. Math.}, pages 227--255. Amer. Math. Soc., Providence, RI, 2002.

\bibitem{Slater1}
L.~J. Slater.
\newblock A new proof of {R}ogers's transformations of infinite series.
\newblock {\em Proc. London Math. Soc. (2)}, 53:460--475, 1951.

\bibitem{Slater2}
L.~J. Slater.
\newblock Further identies of the {R}ogers-{R}amanujan type.
\newblock {\em Proc. London Math. Soc. (2)}, 54:147--167, 1952.

\bibitem{Warnaar}
S.~O. Warnaar.
\newblock 50 years of {B}ailey's lemma.
\newblock In {\em Algebraic combinatorics and applications ({G}\"o\ss
  weinstein, 1999)}, pages 333--347. Springer, Berlin, 2001.

\end{thebibliography}

\end{document}